\documentclass[12pt,one side]{article}
\usepackage{amsmath, amsthm, amscd, amsfonts, amssymb, graphicx, color}
\usepackage[bookmarksnumbered, colorlinks, plainpages]{hyperref}
\usepackage{mathrsfs}
\usepackage{algorithm2e}
\usepackage{hhline,booktabs}
\usepackage{multirow}
\usepackage{float}
\usepackage{placeins}
\usepackage{subfigure}
\usepackage{lipsum}
\setlength{\abovedisplayskip}{0pt}
\setlength{\belowdisplayskip}{0pt}
\usepackage{upgreek}
\usepackage{hhline,booktabs}
\usepackage{multirow}
\usepackage{subfigure}
\usepackage[mathscr]{euscript}
\usepackage{multirow,bigstrut,rotating,float}
 \usepackage{algorithmic}
 \usepackage{kantlipsum}

\allowdisplaybreaks
\numberwithin{equation}{section}
\textwidth 15.5 cm \textheight 22 cm

\newtheorem{theorem}{Theorem}[section]
\newtheorem{cor}[theorem]{Corollary}

\newtheorem{prop}[theorem]{Proposition}
\newtheorem{defn}[theorem]{Definition}
\newtheorem{rem}[theorem]{\bf{Remark}}
\newtheorem{example}[theorem]{\bf{Example}}

\setcounter{page}{1}
\setlength{\parskip}{0pt}
\raggedbottom
\begin{document}

\title{Biframes and some of their properties}

\maketitle
\begin{center}
	\textbf{M. Firouzi Parizi}, \textbf{A. Alijani}, \textbf{M.A. Dehghan},\\ [0.2cm]
	\small {\textit{Department of Mathematics, Faculty of Mathematics, 
	\\
	Vali-e-Asr University of Rafsanjan, Rafsanjan, Iran}}\\
	
 \texttt{{f75maryam@gmail.com; Alijani@vru.ac.ir; dehghan@vru.ac.ir }}
 \end{center}


\begin{abstract} 
Recently, frame multipliers, pair frames, and controlled frames have been investigated to improve the numerical efficiency of iterative algorithms for inverting the frame operator and other applications of frames. 
In this paper, the concept of biframe is introduced for a Hilbert space. A biframe is a pair of sequences in a Hilbert space that applies to an inequality similar to frame inequality. Also, it can be regarded as a generalization of controlled frames and a special kind of pair frames. The basic properties of biframes are investigated based on the biframe operator. Then, biframes are classified based on the type of their constituent sequences. In particular, biframes for which one of the constituent sequences is an orthonormal basis $\{e_k\}_{k=1}^\infty$ are studied. Then, a new class of Riesz bases denoted by $[\{e_k\}]$, is introduced and is called b-Riesz bases. An interesting result is also proved, showing that the set of all b-Riesz bases is a proper subset of the set of all Riesz bases. More precisely, b-Riesz bases induce an equivalence relation on $[\{e_k\}]$.
\end{abstract}

\bigskip
\noindent \textit{Keywords}: biframes, $\{e_k\}$-biframes, controlled frames, frames, multiplier operators, pair frames.\\
2\noindent \textit{2010 AMS Subject Classification}: 65F10, 15A09.

\bigskip
\section{Introduction}

Many years after the advent of frame theory, its importance and applications in various scientific fields have become clear to everyone. The aim of this theory, developed by Duffin and Schaeffer in 1952 \cite{Duffin}, was to solve some problems related to the non-harmonic Fo8urier series. In fact, the frame theory developed Gabor's studies in signal analysis in a pure form, which was extensively studied by the fundamental paper of Daubechies, Grossmann, and Meyer \cite{D.G.M} in 1986. Many researchers in various fields of pure and applied mathematics, engineering, medicine, etc. have studied frames. For more information on frame theory and its applications, we refer the readers to \cite{chris,Daub,Feich, Han,Young}.

To further apply this theory, many generalizations of frames have been proposed. For example, \emph{generalized frames} (\emph{g-frames}),  introduced by Sun \cite{sun1,sun2}, generalize not only the original concept of frame, but also other generalizations of frames, including \emph{bounded quasi-projectors} \cite{Forna1,Forna2}, \emph{frames of subspaces} \cite{Asgari,Casa}, \emph{pseudo-frames} \cite{Li}, \emph{oblique frames} \cite{Chris3,Eldar}, and \emph{outer frames} \cite{Aldroubi}. After that, Askarizadeh and Dehghan showed that every  \emph{g-frame} is an special frame \cite{Dehghan3}. Also, frames and their generalization in Hilbert $C^*$-modules and locally compact abelian groups have been studied in \cite{Dehghan1, Dehghan2,Azarmi1}. \emph{Controlled frames} \cite{Balazs} provide another generalization of frames, actually developing theories related to this concept which were previously introduced in \cite{Bogda} 2and used only as a tool for spherical wavelets.

\emph{pair frames} are a concept for a pair of sequences in a Hilbert space, introduced in \cite{safapour}, 
and one of their important results is to obtain a new reconstruction formula for members of the Hilbert space.
the concept proposed in this paper aims at a different study of a pair of sequences in a Hilbert space, that is called a \emph{biframe}. To define a frame, only one sequence is used; but, to define a biframe two sequences are needed. In fact, the concept of \emph{biframe} is proposed as a generalization of controlled frames and a special case of pair frames. A \emph{biframe} is a pair $(\{f_k\}_{k=1}^\infty, \{g_k\}_{k=1}^\infty)$ of sequences in the Hilbert space $\mathcal{H}$, if there exist positive constants $A$ and $B$ such that
\begin{equation*}
A{\Vert f \Vert}^2\leq\sum_{k=1}^\infty \langle f,f_{k}\rangle\langle g_{k},f\rangle\leq B{\Vert f\Vert}^2,\ \ \ \forall f\in\mathcal{H}. 
\end{equation*}

An operator associated with a biframe is presented, which is a special case of multiplier operators \cite{Balazs.p}. In the simplest case, multiplier operators are well-defined when the sequences are Bessel sequences, which gives us the Bessel multipliers. Also, the invertibility of this operator is obtained when the sequences are Riesz bases. But, it is interesting that the biframe operator has these properties for sequences that are not even Bessel sequences. Also, the biframe operator associated with a biframe has properties very close to those of the frame operator. Therefore, the reconstruction of the elements of the Hilbert space, which is one of the important achievements of frame theory, is well-done. Due to the important role played by orthonormal bases in a Hilbert space, we are looking for sequences that form a biframe together with an orthonormal basis. This research leads us to sets that are actually bases. The new set of all these bases has a place between the set of all orthonormal bases and the set of all Riesz bases, but studying this topic in pair frames does not have such an achievement for us.

This paper is organized as follows. Section 2 contains some preliminary results and notations that are used throughout the paper. In Section 3, we will introduce the new concept of biframe for a Hilbert space, created by8 a pair of sequences, and we present several examples of biframes. In Section 4, by defining the biframe operator associated with a biframe, we discuss the properties of biframes from the perspective of operator theory. Moreover, we investigate those operators which preserve the biframe property. In Section 5, we classify the biframes whose constituent sequences of them are Bessel sequences, frames, and Riesz bases. Finally, in Section 6, we classify the biframes for which one of the constituent sequences is an orthonormal basis. This classification leads us to new sets, which are partitions of the set of all sequences with this property (namely, the property of forming a biframe with an orthonormal basis). We obtain new bases which are closely related to orthonormal bases and Riesz bases.
\section{Notation and preliminaries}
  \ \ Throughout this paper, $\mathcal{H}$ denotes a separable Hilbert space.
The notation $B(\mathcal{H},\mathcal{K})$ denotes the set of all bounded linear operators from $\mathcal{H}$ to the Hilbert space $\mathcal{K}$. If $\mathcal{H}=\mathcal{K}$, this set is denoted by $B(\mathcal{H})$ and $\mathcal{I}$ denote the identity operator on $\mathcal{H}.$
Also, $GL(\mathcal{H})$ is defined as the set of all invertible, bounded linear operators on $\mathcal{H}$, and ${GL}^{+}(\mathcal{H})$ denotes the subset of $GL(\mathcal{H})$ that consists of positive operators. Finally, ${B}^{+}_{b.b.}(\mathcal{H})$ is defined as the set o2f all self-adjoint and positive operators on $\mathcal{H}$ which are bounded below.

A sequence $\{f_{k}\}_{k=1}^{\infty}$ is called a \emph{frame} for $\mathcal{H}$, if there exist positive constants $A$ and $B$ such that
$$A{\Vert f \Vert}^2\leq\sum_{k=1}^{\infty} {\vert \langle f,f_{k}\rangle\vert}^{2}\leq B{\Vert f\Vert}^2,\ \forall f\in \mathcal{H}.$$
The constants $A$ and $B$ are known as lower and upper frame bounds, respectively. If $A=B$, then the frame $\{f_{k}\}_{k=1}^{\infty}$ is called a \emph{tight frame}; it is called a \emph{Parseval frame} if $A=B=1$. The sequence $\{f_{k}\}_{k=1}^{\infty}$ is called a \emph{Bessel sequence} if only the right inequality holds.
an associated operator to frame $F=\{f_{k}\}_{k=1}^{\infty}$ is frame operator, defined by
 $$S_{F}:\mathcal{H}\longrightarrow\mathcal{H};\ {S}_{F}(f)=\sum_{k=1}^{\infty}\langle f,f_{k}\rangle f_{k}.$$
The frame operator $S_F$ is a self-adjoint operator that belongs to ${GL}^{+}(\mathcal{H})$.

Two Bessel sequences $\{f_{k}\}_{k=1}^{\infty}$ and $\{g_{k}\}_{k=1}^{\infty}$ are \emph{dual frames} for $\mathcal{H}$ if one of the following statements holds.
\begin{enumerate}
\item[(i)] $f=\sum_{k=1}^\infty\langle f,g_k\rangle f_k, \ \ \forall f\in\mathcal{H}$.
\item[(ii)] $f=\sum_{k=1}^\infty\langle f,f_k\rangle g_k, \ \ \forall f\in\mathcal{H}$.
\item[(iii)] $\langle f,g\rangle=\sum_{k=1}^\infty\langle f,f_k\rangle \langle g_k,g\rangle, \ \ \forall f,g\in\mathcal{H}$.
\end{enumerate}

A \emph{Riesz basis}  for $\mathcal{H}$ is a family of the form $\{Ue_{k}\}_{k=1}^{\infty}$, where $\{e_k\}_{k=1}^{\infty}$ is an orthonormal basis for $\mathcal{H}$ and the operator $U$ belongs to $GL(\mathcal{H})$. 

Two sequences $\{f_{k}\}_{k=1}^{\infty}$ and $\{g_{k}\}_{k=1}^{\infty}$ in $\mathcal{H}$ are \emph{biorthogonal} if $\langle f_k,g_j\rangle=\delta_{k,j}$.
We refer the readers to \cite{chris} for more details on frames and bases.

 For $U\in GL(\mathcal{H})$, a sequence $\{f_{k}\}_{k=1}^{\infty}$ in $\mathcal{H}$ is a \emph{U-controlled frame}  \cite{Balazs}, if there exist positive constants $A$ and $B$ such that 
\begin{equation*}
A{\Vert f \Vert}^2\leq\sum_{k=1}^{\infty}  \langle f,f_{k}\rangle\langle Uf_{k} , f\rangle \leq B{\Vert f\Vert}^2,\ \forall f\in \mathcal{H}.
\end{equation*}  

For $U$ and $T$ in $GL(\mathcal{H})$, a sequence $\{f_{k}\}_{k=1}^{\infty}$ in $\mathcal{H}$ is a \emph{(T,U)-controlled frame} \cite{Musa}, if there exist positive constants $A$ and $B$ such that
\begin{equation*}
A{\Vert f \Vert}^2\leq\sum_{k=1}^{\infty}  \langle f,Tf_{k}\rangle\langle Uf_{k} , f\rangle \leq B{\Vert f\Vert}^2,\ \forall f\in \mathcal{H}.
\end{equation*} 

A pair $(F,G)=(\{f_{k}\}_{k=1}^{\infty},\{g_{k}\}_{k=1}^{\infty})$ of sequences in $\mathcal{H}$ is a \emph{pair frame} for $\mathcal{H}$, if the pair frame operator $Sf=\sum_{k=1}^\infty \langle f,f_k\rangle g_k$ is well-defined and invertible for every $f\in\mathcal{H}$.
\begin{prop}$\cite{Musa}$\label{4}
Let $S_{1},S_{2} \in {GL}^{+}(\mathcal{H})$. Then, 
$ S_{2}=VS_{1}U^{*}$ if and only if $U=S_{2}^{r}WS_{1}^{-p}$ and $V=S_{2}^{t}TS_{1}^{-q}$, such that $W$ and $T$ are  bounded operators on $\mathcal{H}$ such that $TW^{*}=\textit{I}$, $p,q,r,t\in \mathbb{R}$, and $r+t=1$, $p+q=1$.
\end{prop}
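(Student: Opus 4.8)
The plan is to prove both implications by a short direct computation with the Borel functional calculus; the only point requiring a word of justification is that arbitrary real powers $S_i^{s}$ make sense as bounded self-adjoint operators. Since $S_1,S_2\in{GL}^{+}(\mathcal{H})$, each is positive, bounded, and (being invertible with bounded inverse) bounded below, so its spectrum lies in a compact interval $[m,M]$ with $m>0$ on which $t\mapsto t^{s}$ is continuous and bounded for every $s\in\mathbb{R}$. Hence $S_i^{s}\in B(\mathcal{H})$ is well defined and self-adjoint (the symbol $t^{s}$ is real on the spectrum), and the exponent rules $S_i^{a}S_i^{b}=S_i^{a+b}$, $S_i^{0}=\mathcal{I}$, $(S_i^{s})^{*}=S_i^{s}$ hold. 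I will use these facts freely; they are the main (and essentially the only) technical ingredient, everything else being algebra.

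For the ``if'' direction, assume $U=S_2^{r}WS_1^{-p}$ and $V=S_2^{t}TS_1^{-q}$ with $TW^{*}=\mathcal{I}$, $r+t=1$, $p+q=1$. Taking adjoints gives $U^{*}=S_1^{-p}W^{*}S_2^{r}$, so
\[
VS_1U^{*}=S_2^{t}TS_1^{-q}\,S_1\,S_1^{-p}W^{*}S_2^{r}=S_2^{t}T\,S_1^{\,1-q-p}\,W^{*}S_2^{r}=S_2^{t}(TW^{*})S_2^{r}=S_2^{\,t+r}=S_2,
\]
using $p+q=1$ in the third step and $TW^{*}=\mathcal{I}$, $t+r=1$ in the last two.

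For the ``only if'' direction, suppose $S_2=VS_1U^{*}$ and fix any real numbers $p,q,r,t$ with $p+q=1$, $r+t=1$ (the statement asserts existence of suitable $W,T$ for any such admissible exponents, so I simply build them). Set $W:=S_2^{-r}US_1^{p}$ and $T:=S_2^{-t}VS_1^{q}$; these are bounded since $U,V$ are bounded and all real powers of $S_1,S_2$ are bounded. Then $S_2^{r}WS_1^{-p}=U$ and $S_2^{t}TS_1^{-q}=V$ by the exponent rules, so the claimed factorizations hold, and finally, with $W^{*}=S_1^{p}U^{*}S_2^{-r}$,
\[
TW^{*}=S_2^{-t}VS_1^{q}\,S_1^{p}U^{*}S_2^{-r}=S_2^{-t}\,V\,S_1^{\,q+p}\,U^{*}\,S_2^{-r}=S_2^{-t}(VS_1U^{*})S_2^{-r}=S_2^{-t}S_2S_2^{-r}=\mathcal{I},
\]
using $p+q=1$, the hypothesis $S_2=VS_1U^{*}$, and $r+t=1$. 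This closes both directions. The only place I would be careful about in a fully detailed write-up is the functional-calculus bookkeeping flagged in the first paragraph (boundedness and self-adjointness of $S_i^{s}$ and the additivity of exponents); no genuine obstacle arises beyond that.
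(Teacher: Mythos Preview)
Your proof is correct. Note that the paper does not actually prove this proposition: it is quoted from \cite{Musa} in the preliminaries section and stated without proof, so there is no argument in the paper to compare against. Your direct computation via the Borel functional calculus (defining $W:=S_2^{-r}US_1^{p}$, $T:=S_2^{-t}VS_1^{q}$ in the forward direction and multiplying out in the reverse) is exactly the natural route and matches the spirit in which the result is used later in the paper (e.g.\ in Theorem~\ref{29}).
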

\section{Biframes}
\ \ In this section, we define the main concept of this paper, namely, the notion of biframe. Next, we present some examples to illustrate the concept of biframes and their constituent sequences more.
\begin{defn}\label{7}
A pair $(F,G)=(\{f_{k}\}_{k=1}^\infty , \{g_{k}\}_{k=1}^\infty)$ of sequences in $\mathcal{H}$ is called a \textbf{ biframe} for $\mathcal{H}$ if there exist  positive constants $A$ and $B$ such that
\begin{equation}\label{65}
A{\Vert f \Vert}^2\leq\sum_{k=1}^\infty \langle f,f_{k}\rangle\langle g_{k},f\rangle\leq B{\Vert f\Vert}^2,\ \ \ \forall f\in\mathcal{H}. 
\end{equation}
The numbers $A$ and $B$ are called lower and upper biframe bounds, respectively.\\ 
The biframe $(\{f_{k}\}_{k=1}^\infty , \{g_{k}\}_{k=1}^\infty)$ is called \textbf{Parseval} if $A=B=1$.
 \end{defn}
After defining th8e concept of biframes, its relation to the previously defined notions must be examined. The following remark shows that biframes generalize ordinary frames and controlled frames.
\begin{rem}
According to \emph{Definition} \ref{7}, the following statements are true for a sequence \\$F=\{f_{k}\}_{k=1}^{\infty}$ in $\mathcal{H}$. 
\begin{enumerate}
\item[(i)] If $(F,F)$ is a biframe for $\mathcal{H}$, then $F$ is a frame for $\mathcal{H}$.
\item[(ii)] If $(F,UF)$ is a biframe for some $U\in GL(\mathcal{H})$, then $F$ is a U-controlled frame for $\mathcal{H}$.
\item[(iii)] If $(TF,UF)$ is a biframe for some $T$ and $U$ in $GL(\mathcal{H})$, then $F$ is a (T,U)-controlled frame for $\mathcal{H}$.
\end{enumerate}
 \end{rem}
A biframe is a pair of sequences. Therefore, the relationship between the two sequences that form a biframe is very important. To investigate the relationship between these two sequences, we try to answer the following questions.
 \begin{enumerate}
 \item[$(Q_1)$] What kind of Bessel sequences make biframes? And how about frames, Riesz bases and orthonormal bases?
 \item[$(Q_2)$] Are there sequences that are not Bessel sequences, frames, Riesz bases or orthonormal bases, but form a biframe?
 \item[$(Q_3)$] Are the types of sequences that form a biframe interdependent? For example, if one of the sequences is a Bessel sequence, does it necessarily follow that the other one is also a Bessel sequence? What about Riesz bases and orthonormal bases?
 \end{enumerate}
In the following sections, we will answer these questions. But before that, we present some examples of biframes.
 \begin{example}\label{35}
 Let $\{e_{k}\}_{k=1}^{\infty}$ be an orthonormal basis for $\mathcal{H}$.
 \begin{enumerate}
\item[(i)] \textbf{A biframe may be constructed using two non-Bessel sequences.}\\
It is clear that 
\begin{center}
$\{f_{k}\}_{k=1}^{\infty}=\{e_{1},2e_{2},\frac{1}{3}e_{3},4e_{4},\frac{1}{5}e_{5},6e_{6},\ldots\},$\\
$\{g_{k}\}_{k=1}^{\infty}=\{2e_{1},e_{2},4e_{3},\frac{1}{3}e_{4},6e_{5},\frac{1}{5}e_{6}\ldots\}\ $
\end{center}
are not Bessel sequences. But, the pair $(\{f_{k}\}_{k=1}^{\infty},\{g_{k}\}_{k=1}^{\infty})$ is a biframe, because for $f\in\mathcal{H}$ we can write
$${\Vert f\Vert}^{2}\leq\sum_{k=1}^{\infty}\langle f,f_{k}\rangle\langle g_{k},f\rangle=
\sum_{k=1}^{\infty}\frac{2k}{2k-1}({\vert\langle f,e_{2k-1}\rangle\vert}^{2}+{\vert\langle f,e_{2k}\rangle\vert}^{2})
\leq 2{\Vert f\Vert}^{2}.$$
\item[(ii)] \textbf{Two Bessel sequences may not form a biframe.}\\
It is easy to see that
\begin{center}8
$\{f_{k}\}_{k=1}^{\infty}=\{e_1,\frac{1}{2}e_2,\frac{1}{3}e_3,\frac{1}{4}e_4\ldots\},\ \ \ $\\
$\{g_{k}\}_{k=1}^{\infty}=\{0,e_1,0,\frac{1}{2}e_2,0,\frac{1}{3}e_3,\ldots\}$
\end{center}
are Bessel sequences with bound 1. But, the pair $(\{f_{k}\}_{k=1}^{\infty}  ,\{g_{k}\}_{k=1}^{\infty})$ is not a biframe, because for $f\in\mathcal{H}$ we obtain
\begin{align*}
\sum_{k=1}^{\infty}\langle f,f_{k}\rangle\langle g_{k},f\rangle &=
\langle f,e_{1}\rangle\langle 0,f\rangle+\langle f,\frac{1}{2}e_{2}\rangle\langle e_{1},f\rangle\\
&+\langle f,\frac{1}{3}e_3\rangle\langle 0,f\rangle
\langle f, \frac{1}{4}e_4\rangle\langle\frac{1}{2}e_2,f\rangle+\cdots\\
&=\frac{1}{2}\langle f,e_{2}\rangle\langle e_{1},f\rangle+\frac{1}{8}\langle f,e_{4}\rangle\langle e_2,f\rangle+\cdots. 
\end{align*}
Now, if we set $f=e_{1}$, then the summation in the definition of biframe is equal to 0:
$$\sum_{k=1}^{\infty}\langle f,f_{k}\rangle\langle g_{k},f\rangle=0.$$
This shows that the summation does not have a non-zero lower bound.\\
\item[(iii)] \textbf{Two frames may not form a biframe.}\\
Consider the sequences 
\begin{center}
$\{f_{k}\}_{k=1}^{\infty}=\{-\frac{1}{2}e_{1},\frac{1}{2}e_{1},e_{2},e_{3},\ldots\},$\\
$\{g_{k}\}_{k=1}^{\infty}=\{e_{1},e_{1},e_{2},e_{3},\ldots\}.\ \ \ \ \ $
\end{center}
The sequence $\{f_{k}\}_{k=1}^{\infty}$ is a frame with bounds $\frac{1}{2}$ and 1, and the sequence $\{g_{k}\}_{k=1}^{\infty}$ is a frame with bounds 1 and 2. But, these frames cannot form a biframe, because for $f\in\mathcal{H}$ we can write 
\begin{align*}
\sum_{k=1}^{\infty}\langle f,f_{k}\rangle\langle g_{k},f\rangle &=
\langle f,-\frac{1}{2}e_{1}\rangle\langle e_{1},f\rangle+\langle f,\frac{1}{2}e_{1}\rangle\langle e_{1},f\rangle\\
&+\langle f,e_{2}\rangle\langle e_{2},f\rangle+\langle f,e_{3}\rangle\langle e_{3},f\rangle+\cdots\\
&=\langle f,e_{2}\rangle\langle e_{2},f\rangle+\langle f,e_{3}\rangle\langle e_{3},f\rangle+\cdots\\ &=\sum_{k=2}^\infty{\vert\langle f,e_k\rangle\vert}^2.
\end{align*}
Now, $\sum_{k=1}^{\infty}\langle f,f_{k}\rangle\langle g_{k},f\rangle=0$ if we set $f=e_{1}$. This implies that the summation does not have a non-zero lower bound.
Therefore, the pair $(\{f_{k}\}_{k=1}^{\infty},\{g_{k}\}_{k=1}^{\infty})$ is not a biframe.
\item[(iv)] \textbf{Two Riesz bases may not form a biframe.}\\
We cosider the orthonormal basis $\{e_k(x)\}_{k\in\mathbb{Z}}=\{e^{2\pi ikx}\}_{k\in\mathbb{Z}}$ for $L^2(0,1)$.\\ $T_{-1}$ and $E_1$ are translation and modulation operators on $L^2(0,1)$ respectively. Because these operators are unitary, the following sequences are Riesz bases for $L^2(0,1)$. 
$$\{f_{k}\}_{k\in\mathbb{Z}}=\{T_{-1}(e_k(x))\}_{k\in\mathbb{Z}}=\{e^{2\pi ik(x+1)}\}_{k\in\mathbb{Z}},$$
$$\{g_{k}\}_{k\in\mathbb{Z}}=\{E_{1}(e_k(x))\}_{k\in\mathbb{Z}}=\{e^{2\pi i(k+1)x}\}_{k\in\mathbb{Z}}.$$
For $f\in L^2(0,1)$, we have
\begin{align*}
\sum_{k\in\mathbb{Z}}\langle f,f_k\rangle\langle g_k,f\rangle
&=\sum_{k\in\mathbb{Z}}\Big\langle f,e^{2\pi ik(x+1)}\Big\rangle\Big\langle e^{2\pi i(k+1)x},f\Big\rangle\\
&=\sum_{k\in\mathbb{Z}}\Big(\int_{0}^{1} f(x)e^{-2\pi ik(x+1)}dx\Big)\Big(\int_{0}^{1} e^{2\pi i(k+1)x} \overline{f(x)}dx\Big).
\end{align*}
Now, set $f=e^{2\pi ix},
thbb{Z}\Big(\int_{0}^{1} e^{2\pi i(1-k)x}dx\Big)\Big(\frac{1}{2\pi ik}(e^{2k\pi i}-e^{0})\Big)=0.$
$$\sum_{k\in\mathbb{Z}}\langle e^{2\pi ix} ,e^{2\pi ik(x+1)}\rangle\langle e^{2\pi i(k+1)x},e^{2\pi ix}\rangle
=\sum_{k\in\mathbb{Z}}\Big(\int_{0}^{1} e^{2\pi i(1-k)x}dx\Big)\Big(\int_{0}^{1} e^{2\pi ikx}\Big)=0.$$
Therefore, the summation does not have a non-zero lower bound, so the pair $(\{f_{k}\}_{k\in\mathbb{Z}},\{g_{k}\}_{k\in\mathbb{Z}})$ isnot a biframe for $L^2(0,1).$
\item[(v)] \textbf{Two orthonormal bases may not from a biframe.}\\
Let $\sigma$ be a permutation of $\mathbb{N}$ other than the identity. Then, the pair \\ $(\{e_{k}\}_{k=1}^{\infty},\{e_{\sigma(k)}\}_{k=1}^{\infty})$ is not a biframe. In fact, an easy calculation shows that the summation in the definition of biframe is always equal to 0 for such sequences. If we set $f=e_j$ for some $j\in\mathbb{N}$ such that $\sigma (j)\neq j$, then
$$\sum_{k=1}^{\infty}\langle f,e_{k}\rangle\langle e_{\sigma(k)},f\rangle=\sum_{k=1}^{\infty}\langle e_j,e_{k}\rangle\langle e_{\sigma(k)},e_j\rangle=0.$$
\end{enumerate}
\end{example}
A pair frame is a concept related to pair $(F,G)$ of sequences in $\mathcal{H}$. Although, a biframe is introduced as a pair of sequences with the same symbol, but these two notions are not actually the same. In fact, biframes are special types of pair frames, as we  show in \emph{Corollary} \ref{44}. In Section 6, we focus on the differences between these concepts, where we specifically consider one of their constituent sequences as the orthonormal basis.

The next example shows that a pair frame is not necessarily a biframe.
\begin{example}\label{37}
Consider the Hilbert space $\mathbb{R}^2$, and 
the sequences $F=\{f_{k}\}_{k=1}^{2}$ and $G=\{g_{k}\}_{k=1}^{2}$ defined as follows. 
\begin{center}
$\{f_{k}\}_{k=1}^{2}=\{(1,2),(\frac{8}{7},4)\},\ \ \ \ \ \ \ $\\
$\{g_{k}\}_{k=1}^{2}=\{(-1,\frac{175}{2}),(\frac{7}{4},\frac{-14}{3})\}.$
\end{center}
First, we show that $(F,G)$ is a pair frame by considering the properties of the pair frame operator $S$.
For $(x,y)\in\mathbb{R}^2$, the definition of $S$ is given by the following equalities.
\begin{align*}
S(x,y) &=\big\langle (x,y),(1,2)\big\rangle (-1,\frac{175}{2})+\big\langle (x,y),(\frac{8}{7},4)\big\rangle(\frac{7}{4},\frac{-14}{3})\\
&=(x+2y)(-1,\frac{175}{21})+(\frac{8}{7}x+4y)(\frac{7}{4},\frac{-14}{3})\\
&=(-(x+2y),\frac{175}{21}x+\frac{350}{21}y)+(2x+7y,\frac{-112}{21}x-\frac{56}{3}y)\\
&=(x+5y,3x-2y).
\end{align*}
The matrix associated with the operator $S$ is 
$$[S]=\begin{bmatrix}
1& 5\\
3& -2\\
\end{bmatrix}.$$
It is clear that this matrix is invertible ($det (S)=-17\neq 0$). So, the operator $S$ is well-defined and invertible and therefore, $(F,G)$ is a pair frame. But, this pair is not a biframe. For $(x,y)\in\mathbb{R}^2$,
$$\big\langle (x,y),(1,2)\big\rangle \big\langle(-1,\frac{175}{2}),(x,y)\big\rangle
+\big\langle (x,y),(\frac{8}{7},4)\big\rangle\big\langle(\frac{7}{4},\frac{-14}{3}),(x,y)\big\rangle= x^2-y^2+8xy.$$
Now, if we set $(x,y)=(-1+\frac{\sqrt{6}}{2},\frac{1}{\sqrt{2}})$ then $ x^2-y^2+8xy=0$. Hence this phrase has no nonzero lower bound.
\end{example}
 
 \section{The biframe operator}
 \ \ Since finding the bounds of a biframe is not always easy in practice, to better understand and be able to work with biframes, we need to introduce an operator similar to the frame operator, one that has as many good and useful properties as the frame operator. In this section, we introduce the biframe operator associated with a biframe, and we examine its properties. Also, we characterize biframes based on the properties of this operator.
\begin{defn}\label{66}
Let $(F,G)=(\{f_{k}\}_{k=1}^{\infty},\{g_{k}\}_{k=1}^{\infty})$ be a biframe for $\mathcal{H}$. The biframe operator ${S}_{F,G}$ is defined by
\begin{equation}\label{8}
S_{F,G}:\mathcal{H}\longrightarrow\mathcal{H},\ \ 
 S_{F,G}(f):=\sum_{k=1}^{\infty}\langle f,f_{k}\rangle g_{k}.
\end{equation}
\end{defn}
In what follows, we present some properties of the biframe operator.
\begin{theorem}\label{31}
Let $(F,G)=(\{f_{k}\}_{k=1}^{\infty} , \{g_{k}\}_{k=1}^{\infty})$ be a biframe for $\mathcal{H}$ with bounds A and B. Then, the following statements are true.
\begin{enumerate}
\item[(i)] The operator $S_{F,G}$ is well-defined, bounded, positive, and invertible. 
\item[(ii)] $(F,G)$ is a biframe if and only if $(G,F)$ is a biframe.
\end{enumerate} 
\end{theorem}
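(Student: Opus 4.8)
The plan is to establish (i) first, since (ii) will follow quickly from it. For (i), I would start from the biframe inequality \eqref{65}. Define, for each $n$, the partial-sum operators by a sesquilinear form: the key observation is that the middle term in \eqref{65} equals $\langle S_{F,G}f, f\rangle$ whenever $S_{F,G}f$ is well-defined, so morally $A\mathcal{I}\le S_{F,G}\le B\mathcal{I}$ in the form sense. The first real task is therefore well-definedness and boundedness of $S_{F,G}$: I would show the series $\sum_k \langle f,f_k\rangle g_k$ converges for every $f$. The natural route is to consider the truncated sums $S_N f=\sum_{k=1}^N\langle f,f_k\rangle g_k$ and show $\{S_N f\}$ is Cauchy; to control $\|S_N f - S_M f\|$ one pairs it against an arbitrary unit vector and uses the polarization-type trick together with \eqref{65} applied to sums like $f\pm g$ — but since \eqref{65} only gives a bound on $\sum_k\langle h,f_k\rangle\langle g_k,h\rangle$ (a quantity that is automatically real and nonnegative by the left inequality), I would apply it to $h=f+g$, $h=f-g$, $h=f+ig$, $h=f-ig$ and take the appropriate linear combination to bound $\bigl|\sum_k\langle f,f_k\rangle\langle g_k,g\rangle\bigr|$ by a constant times $\|f\|\,\|g\|$. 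Taking the supremum over unit vectors $g$ gives $\|S_{F,G}f\|\le C\|f\|$, hence boundedness, and the same estimate applied to tails shows the series converges, i.e. $S_{F,G}$ is well-defined.

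Once $S_{F,G}\in B(\mathcal{H})$ is in hand, positivity is immediate: for every $f$, $\langle S_{F,G}f,f\rangle = \sum_k\langle f,f_k\rangle\langle g_k,f\rangle \ge A\|f\|^2 \ge 0$, so $S_{F,G}$ is positive (in particular self-adjoint, since a bounded operator with real, nonnegative quadratic form on a complex Hilbert space is self-adjoint). The inequality $\langle S_{F,G}f,f\rangle\ge A\|f\|^2$ says $S_{F,G}$ is bounded below, and combined with self-adjointness this forces surjectivity (the range is closed and dense), hence $S_{F,G}\in GL^+(\mathcal{H})$ — indeed $S_{F,G}\in B^+_{b.b.}(\mathcal{H})$, and together with $\langle S_{F,G}f,f\rangle\le B\|f\|^2$ we also recover $\|S_{F,G}\|\le B$. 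This completes (i).

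For (ii), the point is simply that the biframe condition for $(G,F)$ reads $A'\|f\|^2\le\sum_k\langle f,g_k\rangle\langle f_k,f\rangle\le B'\|f\|^2$, and $\sum_k\langle f,g_k\rangle\langle f_k,f\rangle = \overline{\sum_k\langle f_k,f\rangle^{*}\cdots}$ — more cleanly, $\sum_k\langle f,g_k\rangle\langle f_k,f\rangle = \overline{\sum_k\langle g_k,f\rangle\langle f,f_k\rangle} = \overline{\sum_k\langle f,f_k\rangle\langle g_k,f\rangle}$, which is the complex conjugate of the middle quantity in \eqref{65}. Since that quantity is real (it lies between $A\|f\|^2$ and $B\|f\|^2$), the two sums are equal, so $(G,F)$ satisfies exactly the same inequality with the same bounds $A,B$. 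Thus $(F,G)$ is a biframe iff $(G,F)$ is. Alternatively one can phrase this operator-theoretically: $S_{G,F} = S_{F,G}^{*}$, and $S_{F,G}$ is positive and invertible iff $S_{F,G}^{*}$ is.

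The main obstacle is the well-definedness/boundedness step in (i): unlike the classical frame situation, here neither $\{f_k\}$ nor $\{g_k\}$ need be a Bessel sequence (Example \ref{35}(i)), so $S_{F,G}$ cannot be factored as a composition of bounded synthesis and analysis operators in the usual way. The argument must extract boundedness of the cross-term sum directly from the single scalar inequality \eqref{65} via the polarization identities applied to $f\pm g$ and $f\pm ig$, and one must be a little careful that each of those four substitutions is legitimate (the inequality holds for \emph{all} vectors, so it is) and that the resulting bound is uniform in $\|g\|$.
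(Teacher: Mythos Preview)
Your approach is correct in substance and your treatment of (ii) matches the paper's exactly, but your route through (i) differs from the paper's in two places worth noting.

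For well-definedness and boundedness, the paper works only with the diagonal: it observes that $\{\langle S_n f,f\rangle\}_{n=1}^\infty$ is Cauchy (immediate from convergence of the scalar series in \eqref{65}) and then cites a black-box result from \cite{Kub} to produce a bounded operator $S$ with $S_n\to S$ weakly. Your polarization argument is more explicit and self-contained: applying \eqref{65} to $f\pm g$ and $f\pm ig$ bounds the full sesquilinear form $\sum_k\langle f,f_k\rangle\langle g_k,g\rangle$ by $2B\|f\|\,\|g\|$ and simultaneously shows that this series converges for every pair $(f,g)$. One caution: your phrase ``the same estimate applied to tails shows the series converges'' suggests norm convergence of $\sum_k\langle f,f_k\rangle g_k$, but the tail bound you obtain via polarization is \emph{not} uniform over unit vectors $g$ (the arguments $f\pm g,\,f\pm ig$ move with $g$), so what you actually get is weak convergence --- which is exactly what the paper obtains as well.

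For invertibility, the paper does not invoke ``positive $\Rightarrow$ self-adjoint.'' Instead it computes the adjoint directly as $S_{F,G}^{*}=S_{G,F}$, notes that both are injective by \eqref{65}, and proves closed range by a Cauchy-sequence argument using the lower bound $A\|f\|^2\le\langle S_{F,G}f,f\rangle$. Your route through self-adjointness is shorter, but it requires $\mathcal{H}$ to be complex (a point the paper flags later in Example~\ref{42} and Remark~\ref{67}); the paper's argument for Theorem~\ref{31} is arranged to go through for real $\mathcal{H}$ as well. Your operator-theoretic alternative for (ii), namely $S_{G,F}=S_{F,G}^{*}$, is precisely the identity the paper uses here.
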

\begin{proof}
(i) To prove that ${S}_{F,G}$ is well-defined, let $f\in\mathcal{H}$. For $n\in\mathbb{N}$, define $${S}_{n}f=\sum_{k=1}^{n}\langle f,f_{k} \rangle g_{k}.$$ The sequence $\{S_{n}\}_{n=1}^{\infty}$ is a sequence of linear and bounded operators on $\mathcal{H}$. Consider $i,j\in\mathbb{N}, i>j$. Then, 
\begin{align*}
\vert \langle {S}_{i}f,f\rangle - \langle {S}_{j}f,f\rangle\vert&=\vert \langle ({S}_{i}f-{S}_{j}f),f\rangle\vert\\
&=\vert \langle \sum_{k=j+1}^{i}\langle f,f_{k}\rangle g_{k},f\rangle\vert\\
&\leq\sum_{k=j+1}^{i}\vert\langle f,f_{k}\rangle \langle g_{k},f\rangle\vert.
\end{align*}
By \emph{Definition} \ref{7}, the series $\sum_{k=1}^{\infty}\langle f,f_{k}\rangle \langle g_{k},f\rangle $ converges in $\mathbb{R}$. So, its associated sequence of partial sums is a Cauchy sequence. Thus, the sequence $\{\langle {S}_{n}f,f\rangle\}_{n=1}^{\infty}$ is Cauchy, and therefore an operator $S\in B(\mathcal{H})$ exists (see \cite{Kub}, for example) such that $\{S_{n}\}_{n=1}^{\infty}$ converges to $S$ weakly:
$$\langle Sf,f\rangle=\lim_{n\to\infty}\langle {S}_{n}f,f\rangle=\langle\sum_{k=1}^{\infty}\langle f,f_{k}\rangle g_{k},f\rangle\\=\langle{S}_{F,G}f,f\rangle.$$
By our definition of the biframe operator $S_{F,G}$ and the uniqueness of limit, we conclude that ${S}_{F,G}=S$, and therefore ${S}_{F,G}$ is a well-defined and bounded operator.
By our definition of ${S}_{F,G}$, for every $f\in\mathcal{H}$ we obtain $\sum_{k=1}^{\infty}\langle f,f_{k}\rangle \langle g_{k},f\rangle=\langle{S}_{F,G}f,f\rangle.$
Now, by \emph{Definition} \ref{7}, we simply observe that
\begin{equation}\label{9}
A{\Vert f\Vert}^{2}\leq\langle S_{F,G}f,f\rangle\leq B{\Vert f\Vert}^{2}.
\end{equation}
Which implies that $S_{F,G}$ is a positive operator.\\
To prove that $S_{F,G}$ is an invertible operator, we need to show that $S_{F,G}$ and $S_{F,G}^*$ are injective and have closed ranges \cite{safapour}. \\
For $f,g\in\mathcal{H}$,
\begin{equation*}\label{38}
\langle S_{F,G}f,g\rangle =\sum_{k=1}^{\infty}\langle f,f_k\rangle \langle g_k,g\rangle =\langle f,S_{G,F}\rangle .
\end{equation*}
Hence, $S_{F,G}^*=S_{G,F}$. By the definition of biframe, $S_{F,G}$ and $S_{G,F}$ are injective.\\
To prove that $S_{F,G}$ has a closed range, 
let $\{h_{n}\}_{n=1}^{\infty}\subset\mathcal{R}(S_{F,G}) $ be a sequence that converges to $h\in\mathcal{H}$.
Then, there exists a sequence $\{t_{n}\}_{n=1}^{\infty}$ in $\mathcal{H}$ such that $S_{F,G}(t_{n})=h_{n} $ for every $n$; this means that the sequence $\{S_{F,G}(t_{n})\}_{n=1}^{\infty}$ converges to $h$.\\
The sequence $\{h_{n}\}$ is convergent, so it is a Cauchy sequence. Let ${\epsilon>0}$ be given.\\
 Then, there exists $N>0$ such that 
 $$\Vert h_{n}-h_{m}\Vert=\Vert S_{F,G}(t_{n})-S_{F,G}(t_{m})\Vert\leq \epsilon,\ \forall m,n\geq N.$$
 Consider $m,n\geq N,\ n>m$. Now, by (\ref{9}) and the \emph{Cauchy-Schwarz inequality}, 
\begin{align*}
A{\Vert t_{n}-t_{m}\Vert}^{2}&\leq \langle S_{F,G}(t_{n}-t_{m}) , (t_{n}-t_{m})\rangle\\
&\leq\Vert S_{F,G}(t_{n})-S_{F,G}(t_{m})\Vert\Vert t_{n}-t_{m}\Vert\\
&\leq\epsilon\Vert t_{n}-t_{m}\Vert.
\end{align*}
Hence $\Vert t_{n}-t_{m}\Vert\leq\frac{\epsilon}{A}$, which implies that the sequence $\{t_{n}\}_{n=1}^{\infty}$ is a Cauchy sequence in $\mathcal{H}$, and so it converges to some $t\in\mathcal{H}$.
Since $S_{F,G}$ is bounded,
$$ S_{F,G} (t_{n})\longrightarrow S_{F,G}( t ),\ as\ n\longrightarrow \infty.$$
On the other hand,
$$S_{F,G} (t_{n})\longrightarrow h,\ as\ n\longrightarrow \infty.$$
Now by the uniqueness of limit we obtain $S_{F,G}( t)=h$. So, $h\in\mathcal{R}(S_{F,G})$ and therefore, $\mathcal{R}(S_{F,G})$ is closed. Thus, $\mathcal{R}(S_{F,G}^*)$ is closed.\\
(ii) Let $(F,G)$ be a  biframe with bounds $A$ and $B$. Then for every $f\in\mathcal{H}$, 
$$A{\Vert f\Vert}^2\leq\sum_{k=1}^{\infty}\langle f,f_k\rangle\langle g_k,f\rangle\leq B{\Vert f\Vert}^2.$$
Now, we can write
$$\sum_{k=1}^{\infty}\langle f,f_k\rangle\langle g_k,f\rangle =\overline{\sum_{k=1}^{\infty}\langle f,f_k\rangle \langle g_k,f\rangle}=\sum_{k=1}^{\infty}\langle f,g_k\rangle \langle f_k,f\rangle.$$
This implies that 
$$A{\Vert f\Vert}^2\leq\sum_{k=1}^{\infty}\langle f,g_k\rangle\langle f_k,f\rangle\leq B{\Vert f\Vert}^2.$$
So, $(G,F)$ is a biframe with bounds $A$ and $B$. The converse of this statement can be proved similarly.
\end{proof}
\begin{cor}\label{44}
Every biframe $(F,G)$ is a pair frame.
\end{cor}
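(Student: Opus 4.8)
The plan is to deduce this immediately from Theorem~\ref{31}(i), since the definitions of the biframe operator $S_{F,G}$ in~\eqref{8} and the pair frame operator $S$ coincide verbatim: both send $f$ to $\sum_{k=1}^{\infty}\langle f,f_k\rangle g_k$. So the only thing to verify is that, for a biframe $(F,G)$, this operator is well-defined for every $f\in\mathcal{H}$ and invertible, which is precisely what Theorem~\ref{31}(i) asserts. Hence the corollary follows with essentially no extra work.

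Concretely, I would argue as follows. Let $(F,G)$ be a biframe for $\mathcal{H}$. By Theorem~\ref{31}(i), the operator $S_{F,G}$ defined by $S_{F,G}f=\sum_{k=1}^{\infty}\langle f,f_k\rangle g_k$ is well-defined, bounded, positive, and invertible on $\mathcal{H}$. But this $S_{F,G}$ is exactly the pair frame operator $S$ associated with the pair $(F,G)$ in the sense of the definition recalled in Section~2 (``$Sf=\sum_{k=1}^\infty \langle f,f_k\rangle g_k$ is well-defined and invertible''). Since $S$ is well-defined and invertible, $(F,G)$ is by definition a pair frame, which completes the proof.

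There is no real obstacle here; the content of the corollary lives entirely inside Theorem~\ref{31}. The one small point worth stating explicitly is the identification of $S_{F,G}$ with the pair frame operator $S$ — that the series defining $S_{F,G}f$ converges (so that $S_{F,G}$ is genuinely ``well-defined'' as a map $\mathcal{H}\to\mathcal{H}$, not merely weakly defined) is already part of the proof of Theorem~\ref{31}(i), so nothing new is needed. If one wanted to be slightly more self-contained, one could remark that well-definedness follows because $G$ restricted against the coefficients $\langle f,f_k\rangle$ yields a norm-convergent series, but this is redundant given the theorem. Thus the proof is a one-line appeal to Theorem~\ref{31}(i) together with unwinding the definition of a pair frame.
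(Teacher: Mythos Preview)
Your proof is correct and matches the paper's own argument essentially verbatim: the paper also simply invokes Theorem~\ref{31} to conclude that $S_{F,G}$ is well-defined and invertible, and then cites the definition of a pair frame. Your additional remark explicitly identifying $S_{F,G}$ with the pair frame operator $S$ is a welcome clarification but not a departure in approach.
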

\begin{proof}
Let $(F,G)$ be a biframe. By \emph{Theorem} \ref{31}, $S_{F,G}$ is well-defined and invertible. So, by the definition of pair frames, $(F,G)$ is a pair frame.
\end{proof}
Reconstruction of the elements of a Hilbert space from the frame coefficients is one of the most important
 achievements of frame theory. The next theorem deals with this reconstruction by using biframes.
\begin{theorem}\label{43}
Suppose that $(\{f_{k}\}_{k=1}^{\infty},\{g_{k}\}_{k=1}^{\infty})$ is a biframe for $\mathcal{H}$ with biframe operator $S_{F,G}$. Then, for every $f\in\mathcal{H}$ the following reconstruction formula holds.
\begin{equation}\label{10}
 f=\sum_{k=1}^{\infty}\langle f,S^{-1}_{G,F}f_{k}\rangle g_{k}=\sum_{k=1}^{\infty}\langle f,f_{k}\rangle S^{-1}_{F,G}g_{k}. 
\end{equation}
Moreover, in a complex Hilbert space, we can rewrite the first equation of (\ref{10}) in the following form.
\begin{equation}\label{68}
f=\sum_{k=1}^{\infty}\langle f,S^{-1}_{G,F}f_{k}\rangle g_{k}=\sum_{k=1}^{\infty}\langle f,S^{-1}_{F,G}f_{k}\rangle g_{k}.
\end{equation}
\end{theorem}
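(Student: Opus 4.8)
The plan is to use the operator identity $S_{F,G}^* = S_{G,F}$ established in the proof of Theorem \ref{31}, together with the invertibility of both $S_{F,G}$ and $S_{G,F}$. The starting point is the observation that $S_{F,G}$ is a well-defined, bounded, invertible operator, so for every $f \in \mathcal{H}$ we may write $f = S_{F,G} S_{F,G}^{-1} f$. Expanding the right-hand side via the definition (\ref{8}) of the biframe operator applied to the vector $S_{F,G}^{-1}f$ gives
\begin{equation*}
f = S_{F,G}\big(S_{F,G}^{-1}f\big) = \sum_{k=1}^{\infty} \langle S_{F,G}^{-1} f, f_k\rangle\, g_k .
\end{equation*}
Now I would move the operator off the first slot of the inner product: since $S_{F,G}^* = S_{G,F}$, we have $(S_{F,G}^{-1})^* = (S_{F,G}^*)^{-1} = S_{G,F}^{-1}$, hence $\langle S_{F,G}^{-1} f, f_k\rangle = \langle f, S_{G,F}^{-1} f_k\rangle$. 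This yields the first equality in (\ref{10}).

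For the second equality I would instead start from $f = S_{F,G}^{-1} S_{F,G} f = S_{F,G}^{-1}\big(\sum_k \langle f, f_k\rangle g_k\big)$ and pull the bounded operator $S_{F,G}^{-1}$ inside the convergent sum (justified because $S_{F,G}^{-1}$ is bounded and the partial sums of $S_{F,G}f$ converge), obtaining $f = \sum_k \langle f, f_k\rangle S_{F,G}^{-1} g_k$. Both formulas in (\ref{10}) are then in hand. A small technical point worth spelling out: the convergence of $\sum_k \langle f, f_k\rangle g_k$ in norm (not merely weakly) is what legitimizes interchanging $S_{F,G}^{-1}$ with the infinite sum; this follows from the fact that $S_{F,G}$ is genuinely the operator $f \mapsto \sum_k \langle f, f_k\rangle g_k$ with norm-convergent series, which is how the biframe operator is defined in Definition \ref{66}, and one may cite the well-definedness part of Theorem \ref{31}.

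For the complex-Hilbert-space refinement (\ref{68}), the point is to replace $S_{G,F}^{-1}$ by $S_{F,G}^{-1}$ in the first slot. Here I would use that $S_{F,G}$ is self-adjoint — which is exactly the content of "positive" together with part (ii)'s symmetry, or more directly: in a complex Hilbert space an operator satisfying $\langle S f, f\rangle \in \mathbb{R}$ for all $f$ is self-adjoint, and (\ref{9}) shows $\langle S_{F,G}f,f\rangle$ is real and nonnegative. Hence $S_{F,G}^* = S_{F,G}$, so combined with $S_{F,G}^* = S_{G,F}$ we get $S_{G,F} = S_{F,G}$ as operators, and therefore $S_{G,F}^{-1} = S_{F,G}^{-1}$; substituting into the first equality of (\ref{10}) produces (\ref{68}). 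The main obstacle, such as it is, is not deep but bookkeeping: one must be careful that the adjoint-shifting step $\langle S_{F,G}^{-1}f, f_k\rangle = \langle f, S_{G,F}^{-1}f_k\rangle$ uses $(S_{F,G}^{-1})^* = S_{G,F}^{-1}$ rather than $S_{G,F}^{-1}$ appearing in the wrong slot, and that in the real-Hilbert-space case one genuinely needs the $S_{G,F}^{-1}$ form because self-adjointness of $S_{F,G}$ (hence $S_{G,F}=S_{F,G}$) is only guaranteed via the reality-of-$\langle Sf,f\rangle$ argument in the complex setting — which is precisely why (\ref{68}) is stated separately.
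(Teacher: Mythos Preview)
Your proof is correct and follows essentially the same approach as the paper: write $f=S_{F,G}S_{F,G}^{-1}f$ and $f=S_{F,G}^{-1}S_{F,G}f$, expand via the defining series, shift the inverse across the inner product using $S_{F,G}^{*}=S_{G,F}$, and in the complex case invoke that positivity forces self-adjointness to identify $S_{G,F}$ with $S_{F,G}$. Your additional remarks on the mode of convergence needed to pull $S_{F,G}^{-1}$ through the sum are more careful than the paper's own treatment, but the argument is otherwise the same.
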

\begin{proof}
We observed in the proof of \emph{Theorem} \ref{31} that ${S_{F,G}}^*=S_{G,F}$. Let $f\in \mathcal{H}.$ Then
\begin{equation*}
f=S_{F,G}S^{-1}_{F,G}f=\sum_{k=1}^{\infty}\langle S^{-1}_{F,G}f,f_{k}\rangle g_{k}=\sum_{k=1}^{\infty}\langle f,S^{-1}_{G,F}f_{k}\rangle g_{k},
\end{equation*}
and
\begin{equation*}
f=S^{-1}_{F,G}S_{F,G}f=S^{-1}_{F,G}\sum_{k=1}^{\infty}\langle f,f_{k}\rangle g_{k}=\sum_{k=1}^{\infty}\langle f,f_{k}\rangle S^{-1}_{F,G}g_{k}.
\end{equation*}
For the final assertion note that in a complex Hilbert space, every positive operator is self-adjoint. (The next example shows that this is not true in real Hilbert spaces.) So, we have the self-adjointness property for the positive operator $S_{F,G}$ in complex Hilbert spaces and therefore, the result follows clearly by using $S_{F,G}=S_{G,F}$ in  (\ref{10}).
\end{proof}
The biframe decomposition stated in (\ref{10}) is the \textit{reconstruction formula} of biframe theory. The sequence $\{\langle f,S^{-1}_{F,G}f_{k}\rangle\}_{k=1}^\infty$ is called the \textit{sequence of biframe coefficients}.
\begin{example}\label{42}
Consider the space $\mathbb{R}^2$ as a real Hilbert space.\\
 The matrix 
 $A=\begin{bmatrix}
1 & 3\\
2 & 8 
\end{bmatrix}$
is positive because
for $\begin{bmatrix}
x\\
y
\end{bmatrix} \in\mathbb{R}^2,$
$$\Big\langle\begin{bmatrix}
1 & 3\\ 
2 & 8
\end{bmatrix}
\begin{bmatrix}
x\\
y
\end{bmatrix},
\begin{bmatrix}
x\\
y
\end{bmatrix}\Big\rangle=\Big\langle
\begin{bmatrix}
x+3y\\
2x+8y
\end{bmatrix},
\begin{bmatrix}
x\\
y
\end{bmatrix}\Big\rangle=x^2+5xy+8y^2=(x+\frac{5}{2}y)^2+\frac{7}{4}y^2>0.$$
Clearly, the adjoint of this matrix is 
$A=\begin{bmatrix}
1 & 2\\
3 & 8
\end{bmatrix},$
and this shows that $A$ is not self-adjoint.
\end{example}
As mentioned at the beginning of this section, proving that two sequences form a biframe by using the definition is by no means an easy task. Therefore, we try to characterize biframes based on the properties of the biframe operator. The next theorem presents these conditions.
\begin{theorem}\label{32}
Let $\{f_{k}\}_{k=1}^{\infty}$ and $\{g_{k}\}_{k=1}^{\infty}$ be sequences in a complex Hilbert space $\mathcal{H}$. Then,\\
$(\{f_{k}\}_{k=1}^{\infty},\{g_{k}\}_{k=1}^{\infty})$ is a biframe for $\mathcal{H}$ if and only if $S_{F,G}$ is a positive and bounded below operator.
\end{theorem}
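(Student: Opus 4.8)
I would prove the biconditional by treating the two implications separately; the forward one is little more than a restatement of \emph{Theorem} \ref{31}, while the reverse one is the substantive half and is exactly where the complexness of $\mathcal{H}$ is used.

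For the forward implication I would argue as follows. Assume $(F,G)$ is a biframe. Then \emph{Theorem} \ref{31}(i) already gives that $S_{F,G}$ is well-defined, bounded and positive, so the only thing to add is that it is bounded below; but any invertible bounded operator is bounded below, since
$$\|f\|=\|S_{F,G}^{-1}S_{F,G}f\|\le\|S_{F,G}^{-1}\|\,\|S_{F,G}f\|\qquad(f\in\mathcal{H})$$
forces $\|S_{F,G}f\|\ge\|S_{F,G}^{-1}\|^{-1}\|f\|$, and $\|S_{F,G}^{-1}\|^{-1}>0$. (So in fact $S_{F,G}\in{B}^{+}_{b.b.}(\mathcal{H})$.)

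For the reverse implication I would start from the assumption that $S_{F,G}$ is a positive, bounded-below operator, say $\|S_{F,G}f\|\ge c\|f\|$ for all $f$ with $c>0$; then $S_{F,G}\neq0$ and $\|S_{F,G}\|\ge c>0$. The first thing to observe is that $S_{F,G}$ is automatically self-adjoint: on a complex Hilbert space a positive operator has real-valued quadratic form, hence is symmetric, and a symmetric everywhere-defined operator is bounded self-adjoint by the Hellinger--Toeplitz theorem (alternatively, one simply reads ``bounded'' into the hypothesis). Since $S_{F,G}$ is the everywhere-defined operator $f\mapsto\sum_{k}\langle f,f_{k}\rangle g_{k}$, pairing with $f$ gives
$$\sum_{k=1}^{\infty}\langle f,f_{k}\rangle\langle g_{k},f\rangle=\langle S_{F,G}f,f\rangle\qquad(f\in\mathcal{H}),$$
which reduces the problem to producing two-sided bounds on $\langle S_{F,G}f,f\rangle$. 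The upper bound is immediate: $0\le\langle S_{F,G}f,f\rangle\le\|S_{F,G}f\|\,\|f\|\le\|S_{F,G}\|\,\|f\|^{2}$, so $B:=\|S_{F,G}\|$ works. For the lower bound I would invoke the generalized Cauchy--Schwarz inequality for the positive semidefinite Hermitian form $(f,g)\mapsto\langle S_{F,G}f,g\rangle$; applying it with $g=S_{F,G}f$ yields
$$\|S_{F,G}f\|^{4}=\bigl|\langle S_{F,G}f,S_{F,G}f\rangle\bigr|^{2}\le\langle S_{F,G}f,f\rangle\,\langle S_{F,G}^{2}f,S_{F,G}f\rangle\le\|S_{F,G}\|\,\|S_{F,G}f\|^{2}\,\langle S_{F,G}f,f\rangle,$$
so that $\langle S_{F,G}f,f\rangle\ge\|S_{F,G}f\|^{2}/\|S_{F,G}\|\ge(c^{2}/\|S_{F,G}\|)\|f\|^{2}$ for $f\neq0$ (the case $f=0$ being trivial). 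Taking $A:=c^{2}/\|S_{F,G}\|>0$ then shows $(F,G)$ is a biframe with bounds $A,B$.

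The main obstacle is the lower bound in the reverse implication, and it is entirely the self-adjointness of $S_{F,G}$: without it the sesquilinear form $(f,g)\mapsto\langle S_{F,G}f,g\rangle$ need not be Hermitian, and the generalized Cauchy--Schwarz step fails. This is precisely why the theorem is stated over a complex Hilbert space --- over $\mathbb{R}$ a positive operator need not be self-adjoint (cf. \emph{Example} \ref{42}), and one can then construct pairs $(F,G)$ in $\mathbb{R}^{2}$ for which $S_{F,G}$ is positive, bounded below, even invertible, yet $\inf_{\|f\|=1}\langle S_{F,G}f,f\rangle=0$, so that no nonzero lower biframe bound exists. If one prefers to bypass the generalized Cauchy--Schwarz inequality, the same lower bound follows from the spectral theorem: a self-adjoint positive bounded-below operator is invertible with spectrum in $[\,\|S_{F,G}^{-1}\|^{-1},\|S_{F,G}\|\,]$, which gives $\|S_{F,G}^{-1}\|^{-1}\|f\|^{2}\le\langle S_{F,G}f,f\rangle\le\|S_{F,G}\|\,\|f\|^{2}$ at once.
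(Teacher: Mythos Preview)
Your proof is correct and follows essentially the same route as the paper's: the forward direction invokes \emph{Theorem} \ref{31} and invertibility, and the reverse direction uses self-adjointness (from positivity over $\mathbb{C}$) together with the inequality $\|S_{F,G}f\|^{2}\le\|S_{F,G}\|\,\langle S_{F,G}f,f\rangle$, yielding the identical lower bound $c^{2}/\|S_{F,G}\|$. The only cosmetic difference is that you derive this inequality explicitly via the generalized Cauchy--Schwarz inequality for the Hermitian form $(f,g)\mapsto\langle S_{F,G}f,g\rangle$, whereas the paper simply cites it from \cite{Kub}; your added spectral-theorem remark is a nice alternative but not needed.
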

\begin{proof}
Let $(\{f_{k}\}_{k=1}^{\infty},\{g_{k}\}_{k=1}^{\infty})$ be a biframe for $\mathcal{H}$. As we observed in \emph{Theorem} \ref{31}, $S_{F,G}$ is a positive and invertible operator. Therefore, it is bounded below, too.
Conversely, suppose that $S_{F,G}$ is a positive and bounded below operator on $\mathcal{H}$.
Since $S_{F,G}$ is a positive operator, it is self-adjoint. For $f\in\mathcal{H}$,
$$\sum_{k=1}^{\infty}\langle f,f_{k}\rangle \langle g_{k},f\rangle =\langle S_{F,G}f, f\rangle=\vert\langle S_{F,G}f, f\rangle\vert\leq\Vert S_{F,G}\Vert{\Vert f\Vert}^{2}.$$
To obtain the lower bound, by the positivity of  $S_{F,G}$ \cite{Kub}, we obtain the following inequality.
\begin{equation}\label{11}
{\Vert S_{F,G}f\Vert}^{2}\leq\Vert S_{F,G}\Vert\langle S_{F,G}f,f\rangle.
\end{equation}
On the other hand,  $S_{F,G}$ is bounded below with lower bound $\alpha$:
\begin{equation}\label{12}
\exists \alpha >0,\ \ \ \   \alpha\Vert f\Vert\leq\Vert S_{F,G}f\Vert.
\end{equation}
Using (\ref{11}) and (\ref{12}) we find that 
$${\alpha}^{2}{\Vert f\Vert}^{2}\leq\Vert S_{F,G}\Vert\langle S_{F,G}f,f\rangle, \ \ \  \forall f\in\mathcal{H}.$$
This implies that $\langle S_{F,G}f,f \rangle\geq\frac{{\alpha}^{2}}{\Vert S_{F,G}\Vert} {\Vert f\Vert}^{2}$ and so, $\frac{{\alpha}^{2}}{\Vert S_{F,G}\Vert}$ is a lower bound for $\langle S_{F,G}f,f \rangle$. Thus, 
 $(\{f_{k}\}_{k=1}^{\infty},\{g_{k}\}_{k=1}^{\infty})$ is a biframe for $\mathcal{H}$.
\end{proof}
To answer the questions we posed in the previous section about the structural relationship between the two sequences that form a biframe, we establish the next theorem which determines the dependency of two Riesz bases that form a biframe. First, we show by some examples that this dependency is not necessarily true for Bessel sequences and frames.
\begin{example}\label{41}
Let $\{e_k\}_{k=1}^\infty$ be an orthonormal basis for $\mathcal{H}$.  
\begin{enumerate}
\item[(i)] \textbf{A biframe constructed by a frame and a non-frame}\\
 Consider the following sequences.
\begin{center}
$\{f_k\}_{k=1}^\infty=\{e_1,e_1,e_1,e_2,e_2,e_2,e_3,e_3,e_3,\ldots\},\ \ \ \ \ \ \ \ \ \ \ $\\
$\{g_k\}_{k=1}^\infty=\{2e_1,e_1,-e_1,\frac{3}{2}e_2,e_1,-e_1,\frac{4}{3}e_3,e_1,-e_1,\ldots\}$.
\end{center}
For $f\in\mathcal{H}$,
$$\sum_{k=1}^\infty\langle f,f_k\rangle\langle g_k,f\rangle=\sum_{k=1}^\infty\frac{k+1}{k}{\vert\langle f,e_k\rangle\vert}^2.$$
Also,
$${\Vert f\Vert}^2\leq\sum_{k=1}^\infty\frac{k+1}{k}{\vert\langle f,e_k\rangle\vert}^2\leq 2{\Vert f\Vert}^2.$$
Therefore, $(\{f_k\}_{k=1}^\infty,\{g_k\}_{k=1}^\infty)$ is a biframe with bounds 1 and 2, although $\{f_k\}_{k=1}^\infty$ is a frame and $\{g_k\}_{k=1}^\infty$ is not.
\item[(ii)] \textbf{A Parseval biframe constructed by a Bessel sequence and a non-Bessel sequence}\\
Consider the following sequences.
\begin{center}
$\{f_k\}_{k=1}^\infty=\{e_1,\frac{1}{2}e_2,e_3,\frac{1}{4}e_4,e_5,\ldots\},$\\
$\{g_k\}_{k=1}^\infty=\{e_1,2e_2,e_3,4e_4,e_5,\ldots\}.$
\end{center}
For $f\in\mathcal{H}$, we obtain the following equalities.
$$\sum_{k=1}^\infty\langle f,f_k\rangle\langle g_k,f\rangle=\sum_{k=1}^\infty{\vert\langle f,e_k\rangle\vert}^2={\Vert f\Vert}^2.$$
\end{enumerate}
Hence, $(\{f_k\}_{k=1}^\infty,\{g_k\}_{k=1}^\infty)$ is a Parseval biframe, although $\{f_k\}_{k=1}^\infty$ is a Bessel sequence and $\{g_k\}_{k=1}^\infty$ is not.
\end{example}
\begin{theorem}\label{39}
Suppose that $(\{f_{k}\}_{k=1}^{\infty},\{g_{k}\}_{k=1}^{\infty})$ is a biframe for $\mathcal{H}$. Then, $\{f_{k}\}_{k=1}^{\infty}$ is a Riesz basis for $\mathcal{H}$ if and only if $\{g_{k}\}_{k=1}^{\infty}$ is a Riesz basis for $\mathcal{H}$.
\end{theorem}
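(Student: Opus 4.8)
The plan is to exploit the factorization $S_{F,G} = \theta_G^* \theta_F$ where $\theta_F, \theta_G$ are the analysis operators, but since we do not yet know the sequences are Bessel, I would instead work directly with the invertibility of $S_{F,G}$ from \emph{Theorem} \ref{31} and symmetry from part (ii) of that theorem. By symmetry (swapping $F$ and $G$, using that $(F,G)$ is a biframe iff $(G,F)$ is), it suffices to prove one direction: assume $\{f_k\}_{k=1}^\infty$ is a Riesz basis and show $\{g_k\}_{k=1}^\infty$ is one too. So write $f_k = U e_k$ for an orthonormal basis $\{e_k\}$ and $U \in GL(\mathcal{H})$.

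First I would show the reconstruction formula forces an explicit formula for the $g_k$. From \emph{Theorem} \ref{43}, for every $f$ we have $f = \sum_k \langle f, f_k\rangle S_{F,G}^{-1} g_k$. Applying this with $f = f_j$ and using that $\{f_k\}$ is a Riesz basis (hence has a unique biorthogonal Riesz basis $\{\tilde f_k\}$ with $\langle f_j, \tilde f_k\rangle = \delta_{j,k}$, and coefficients in $\ell^2$ are uniquely determined), I can read off $S_{F,G}^{-1} g_k = \tilde f_k = (U^*)^{-1} e_k$. Hence $g_k = S_{F,G}(U^*)^{-1} e_k = \big(S_{F,G}(U^*)^{-1}\big) e_k$. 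Since $S_{F,G} \in GL(\mathcal{H})$ by \emph{Theorem} \ref{31} and $(U^*)^{-1} \in GL(\mathcal{H})$, the operator $V := S_{F,G}(U^*)^{-1}$ belongs to $GL(\mathcal{H})$, so $\{g_k\} = \{V e_k\}$ is a Riesz basis for $\mathcal{H}$ by the definition of Riesz basis given in Section 2. The converse direction is identical after swapping the roles of $F$ and $G$ (and replacing $S_{F,G}$ by $S_{G,F} = S_{F,G}^*$, which is also in $GL(\mathcal{H})$).

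The main obstacle is the step that reads off $S_{F,G}^{-1} g_k = \tilde f_k$ from the identity $f = \sum_k \langle f, f_k\rangle S_{F,G}^{-1} g_k$: this requires knowing that the expansion of an element of $\mathcal{H}$ in terms of the Riesz basis $\{f_k\}$ has \emph{unique} coefficients, so that the vectors multiplying $\langle f, f_k\rangle$ must coincide with the biorthogonal system $\{\tilde f_k\}$. This is where I would be careful: I would take $f = f_j$, note $\langle f_j, f_k\rangle$ need not be $\delta_{jk}$, so instead I would pair the reconstruction identity against an arbitrary $h \in \mathcal{H}$ and use $\langle f, f_k \rangle = \langle U^* f, e_k\rangle$ together with the fact that $\{\langle U^* f, e_k\rangle\}_k$ ranges over all of $\ell^2$ as $f$ ranges over $\mathcal{H}$; matching the resulting expansions in the orthonormal basis forces $\langle S_{F,G}^{-1} g_k, h\rangle = \langle (U^*)^{-1} e_k, h\rangle$ for all $k$ and all $h$, hence $S_{F,G}^{-1} g_k = (U^*)^{-1} e_k$. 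Everything else — boundedness and invertibility of $S_{F,G}$, the adjoint identity, the symmetry $(F,G) \leftrightarrow (G,F)$ — is quoted directly from \emph{Theorem} \ref{31} and \emph{Theorem} \ref{43}.
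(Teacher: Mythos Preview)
Your argument is correct and lands on exactly the same formula as the paper, $g_k = S_{F,G}(U^*)^{-1}e_k$ with $S_{F,G}(U^*)^{-1}\in GL(\mathcal{H})$; the only cosmetic difference is that the paper verifies this identity by plugging $(U^*)^{-1}e_k$ directly into the series defining $S_{F,G}$ (so the sum collapses via $\langle (U^*)^{-1}e_k, Ue_i\rangle=\delta_{ki}$), whereas you reach it through the reconstruction formula of \emph{Theorem}~\ref{43} specialized at $f=(U^*)^{-1}e_j$. Either way the proof and the converse-by-symmetry are the same.
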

\begin{proof}
Let $\{f_{k}\}_{k=1}^{\infty}$ be a Riesz basis. Then, there exists an operator $V\in GL(\mathcal{H})$ such that $f_{k=}Ve_{k}$, for all $k\in\mathbb{N}$, where $\{e_{k}\}_{k=1}^{\infty}$ is an orthonormal basis for $\mathcal{H}$.\\
Define the operator
$$U:\mathcal{H}\rightarrow\mathcal{H},\ \ Uf=S_{F,G}{({V}^{*})}^{-1}f.$$
Clearly, $U\in GL(\mathcal{H})$ and for $k\in\mathbb{N}$, we obtain
\begin{align*}
Ue_{k}=S_{F,G}{({V}^{*})}^{-1}e_{k}&=\sum_{i=1}^{\infty}\langle {({V}^{*})}^{-1}e_{k},f_{i}\rangle g_{i}\\
&=\sum_{i=1}^{\infty}\langle {({V}^{*})}^{-1}e_{k},Ve_{i}\rangle g_{i}\\
&=\sum_{i=1}^{\infty}\langle {V}^{*}{({V}^{*})}^{-1}e_{k},e_{i}\rangle g_{i}\\
&=\sum_{i=1}^{\infty}\langle e_{k},e_{i}\rangle g_{i}=g_{k}.
\end{align*}
Hence $ g_{k}=Ue_{k}$, for all $k\in\mathbb{N}$. This means that $\{g_{k}\}_{k=1}^{\infty}$ is a Riesz basis for $\mathcal{H}$.\\
Similarly, we can prove that  $\{f_{k}\}_{k=1}^{\infty}$ is a Riesz basis if $\{g_{k}\}_{k=1}^{\infty}$ is a Riesz basis.
\end{proof}
\begin{rem}\label{67}
As mentioned before, a positive operator in a complex Hilbert space is self-adjoint. To use the self-adjointness property of the positive biframe operator $S_{F,G}$, from now on we assume that the Hilbert space $\mathcal{H}$ is complex.
\end{rem}
To investigate the operators that preserve the biframe property, since we deal with a pair of sequences in the definition of a biframe, we can study the action of two different operators on the sequences in a biframe. The next theorem presents some operators that preserve the biframe property of a given biframe.
\begin{theorem}\label{29}
Suppose that $(F,G)=(\{f_{k}\}_{k=1}^{\infty},\{g_{k}\}_{k=1}^{\infty})$ is a biframe for $\mathcal{H}$ with biframe\\ operator $S_{F,G}$. Then, the following statements are true.
\begin{enumerate}
\item[(i)]$(\{Uf_{k}\}_{k=1}^{\infty},\{Vg_{k}\}_{k=1}^{\infty})$ is a biframe for $\mathcal{H}$, for some operators $U$ and $V$ in $B(\mathcal{H})$ if and only if there exist operators $Q\in{B}^{+}_{b.b.}(\mathcal{H})$ and   $T,W\in B(\mathcal{H})$ such that $T{W}^{*}=\textit{I}$ and\\
 $U={Q}^{r}W{S}^{-p}_{F,G}\ ,\ V={Q}^{t}T{S}^{-q}_{F,G},$ when $p,q,r,t\in\mathbb{R}$ such that $p+q=1$ and $r+t=1$. 
 \item[(ii)]$(\{Uf_{k}\}_{k=1}^{\infty},\{Vg_{k}\}_{k=1}^{\infty})$ is a Parseval biframe for $\mathcal{H}$, for some operators $U$ and $V$ in $B(\mathcal{H})$ if and only if there exist operators $T,W\in B(\mathcal{H})$ such that $T{W}^{*}=\textit{I}$ and $U=W{S}^{-p}_{F,G}\ ,\ V=T{S}^{-q}_{F,G},$ when $p,q\in\mathbb{R}$ such that $p+q=1$.  
 \end{enumerate}
In particular, if $(\{f_{k}\}_{k=1}^{\infty},\{g_{k}\}_{k=1}^{\infty})$ is a Parseval biframe, then 
 $(\{Uf_{k}\}_{k=1}^{\infty},\{Vg_{k}\}_{k=1}^{\infty})$ is a Parseval biframe if and only if  $VU^{*}=\textit{I}.$
\end{theorem}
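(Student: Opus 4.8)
The plan is to prove the two equivalences in (i) and (ii) by reducing them to Proposition \ref{4}, which characterizes the factorization $S_2 = VS_1U^*$ between positive invertible operators, and then to deduce the final ``In particular'' statement as the special case $S_1 = S_2 = \mathcal{I}$ of part (ii). The key observation is that $(\{Uf_k\},\{Vg_k\})$ is a biframe precisely when its biframe operator is positive and bounded below (Theorem \ref{32}), and that this biframe operator, when it exists, equals $V S_{F,G} U^*$. Indeed, for $f\in\mathcal{H}$ one computes formally $\sum_k \langle f, Uf_k\rangle Vg_k = V\sum_k \langle U^*f, f_k\rangle g_k = V S_{F,G} U^* f$, so the candidate biframe operator of the pair $(\{Uf_k\},\{Vg_k\})$ is $Q := V S_{F,G}U^*$.

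First I would establish (i). For the forward direction, assume $(\{Uf_k\},\{Vg_k\})$ is a biframe; by Theorem \ref{31} its biframe operator $Q = VS_{F,G}U^*$ is positive, bounded, and invertible, hence $Q\in B^+_{b.b.}(\mathcal{H})$, and also $S_{F,G}\in GL^+(\mathcal{H})$ by Theorem \ref{31}(i). Applying Proposition \ref{4} with $S_1 = S_{F,G}$ and $S_2 = Q$ yields bounded $W,T$ with $TW^* = \mathcal{I}$ and exponents $p+q=1$, $r+t=1$ such that $U = Q^r W S_{F,G}^{-p}$ and $V = Q^t T S_{F,G}^{-q}$, which is exactly the asserted form. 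Conversely, if $U$ and $V$ have that form with $Q\in B^+_{b.b.}(\mathcal{H})$ and $TW^*=\mathcal{I}$, then $V S_{F,G} U^* = Q^t T S_{F,G}^{-q} S_{F,G} S_{F,G}^{-p} W^* Q^r = Q^t (TW^*) Q^r = Q^{t}Q^{r} = Q$ using $p+q=1$, $r+t=1$, and self-adjointness of the real powers of $S_{F,G}$ (legitimate by Remark \ref{67}, since $\mathcal{H}$ is complex so $S_{F,G}$ is self-adjoint and positive). Since $Q$ is positive and bounded below, one still has to check that $Q$ really is the biframe operator of $(\{Uf_k\},\{Vg_k\})$, i.e.\ that the defining series converges appropriately; this follows by the same partial-sum/weak-convergence argument used in the proof of Theorem \ref{31}(i), because $\langle Q f, f\rangle = \sum_k \langle f, Uf_k\rangle\langle Vg_k, f\rangle$ holds termwise and $Q$ is bounded. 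Then Theorem \ref{32} gives that $(\{Uf_k\},\{Vg_k\})$ is a biframe.

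Next I would treat (ii): a biframe is Parseval exactly when its biframe operator equals $\mathcal{I}$, so the pair $(\{Uf_k\},\{Vg_k\})$ is a Parseval biframe iff $V S_{F,G} U^* = \mathcal{I}$. Now apply Proposition \ref{4} with $S_1 = S_{F,G}$ and $S_2 = \mathcal{I}$: since $\mathcal{I}\in GL^+(\mathcal{H})$ and $\mathcal{I}^s = \mathcal{I}$ for every real $s$, the factorization conditions collapse to $U = W S_{F,G}^{-p}$, $V = T S_{F,G}^{-q}$ with $TW^* = \mathcal{I}$ and $p+q=1$, which is the claimed characterization. Finally, the ``In particular'' statement is the case $S_{F,G} = \mathcal{I}$ of (ii): if $(\{f_k\},\{g_k\})$ is itself Parseval then $S_{F,G} = \mathcal{I}$, so $U = W S_{F,G}^{-p} = W$ and $V = T S_{F,G}^{-q} = T$, whence the condition $TW^* = \mathcal{I}$ becomes simply $VU^* = \mathcal{I}$; conversely $VU^* = \mathcal{I}$ gives $V S_{F,G} U^* = V\mathcal{I}U^* = \mathcal{I}$, so the new pair is Parseval.

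The main obstacle I anticipate is not the algebra of exponents — that is a routine bookkeeping of $p+q=1$, $r+t=1$ — but rather the \emph{well-definedness} issue: Proposition \ref{4} is a statement about bounded operators, so to invoke it I must first know that $Q = VS_{F,G}U^*$ is genuinely the biframe operator of $(\{Uf_k\},\{Vg_k\})$, i.e.\ that $\sum_k \langle f, Uf_k\rangle Vg_k$ converges for each $f$ and defines this bounded operator, and in the converse direction that positivity plus boundedness-below of the prescribed $Q$ actually forces the pair to satisfy Definition \ref{7}. Both points are handled by importing, essentially verbatim, the weak-convergence argument from the proof of Theorem \ref{31}(i) together with Theorem \ref{32}; I would state this reduction carefully and refer back rather than repeat it. A secondary subtlety is that several steps (self-adjointness of $S_{F,G}$ and its powers, the identity $S_{F,G}^* = S_{G,F}$ entering $Q = (U S_{G,F} V^*)^*$-type manipulations) rely on $\mathcal{H}$ being complex, which is exactly the standing assumption fixed in Remark \ref{67}, so I would flag that dependence explicitly.
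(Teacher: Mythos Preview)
Your proposal is correct and follows essentially the same route as the paper: compute the biframe operator of $(\{Uf_k\},\{Vg_k\})$ as $VS_{F,G}U^*$, identify it as an element of $B^+_{b.b.}(\mathcal{H})$ via Theorem~\ref{31}/\ref{32}, and then invoke Proposition~\ref{4} with $S_1=S_{F,G}$, $S_2=Q$ (respectively $S_2=\mathcal{I}$ for part (ii)). Your extra care about well-definedness is not strictly needed---since $S_{F,G}$ is already known to be bounded and $U,V\in B(\mathcal{H})$, the series $\sum_k\langle f,Uf_k\rangle Vg_k = V\,S_{F,G}(U^*f)$ converges automatically---but it does no harm.
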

\begin{proof}
(i) Suppose that $(UF,VG)=(\{Uf_{k}\}_{k=1}^{\infty},\{Vg_{k}\}_{k=1}^{\infty})$ is a biframe for $\mathcal{H}$ with biframe operator $S_{UF,VG}$, when $U,V\in B(\mathcal{H}).$ For $f\in{\mathcal{H}}$,
\begin{equation}\label{13}
S_{UF,VG}f=\sum_{k=1}^{\infty}\langle f,Uf_{k}\rangle Vg_{k}=V\sum_{k=1}^{\infty}\langle {U}^{*} f,f_{k}\rangle g_{k}
=VS_{F,G}{U}^{*}f.
\end{equation}
Since $(\{f_{k}\}_{k=1}^{\infty},\{g_{k}\}_{k=1}^{\infty})$ and $(\{Uf_{k}\}_{k=1}^{\infty},\{Vg_{k}\}_{k=1}^{\infty})$ are biframes, \emph{Theorem} \ref{32} allows us to conclude that the operators $S_{F,G}$ and $S_{UF,VG}$ are in ${B}^{+}_{b.b.}(\mathcal{H})$, and also are in ${GL}^{+}(\mathcal{H}).$
By (\ref{13}) and \emph{Proposition} \ref{4}, there exist operators $ T,W\in B(\mathcal{H})$ such that $T{W}^{*}=\textit{I}$ and
\begin{equation}\label{14}
U={S}^{r}_{UF,VG}W{S}^{-p}_{F,G}\ \ \  ,\ \ \  V={S}^{t}_{UF,VG}T{S}^{-q}_{F,G}.
\end{equation} 
Conversely, let $Q\in {B}^{+}_{b.b.}(\mathcal{H})$, and let $T$ and $W$ be operators in $B(\mathcal{H})$ such that $T{W}^{*}=\textit{I}$ and
\begin{equation}\label{15}
U={Q}^{r}W{S}^{-p}_{F,G}\ \ \  ,\ \ \  V={Q}^{t}T{S}^{-q}_{F,G}.
\end{equation}
Then, by \emph{Proposition} \ref{4}, $Q=VS_{F,G}{U}^{*}$, which means that for every $f\in\mathcal{H}$,
$$Qf=VS_{F,G}{U}^{*}f
=V(\sum_{k=1}^{\infty}\langle {U}^{*}f,f_{k}\rangle g_{k})
=\sum_{k=1}^{\infty}\langle f,U f_{k}\rangle Vg_{k}.$$
Therefore, Q is of the form of a biframe operator. By the assumption $Q\in{B}^{+}_{b.b.}(\mathcal{H})$,  \emph{Theorem} \ref{32} allows us to conclude that $(\{Uf_{k}\}_{k=1}^{\infty},\{Vg_{k}\}_{k=1}^{\infty})$ is a biframe for $\mathcal{H}.$\\
(ii) This part is a direct consequence of the previous part. 
\end{proof}

Having in mind the final part of \emph{Theorem} \ref{29}, we obtain some simple results for Parseval biframes.
\begin{cor}\label{46}
Suppose that $U,V\in B(\mathcal{H})$. For sequences $\{f_{k}\}_{k=1}^{\infty}$ and $\{g_{k}\}_{k=1}^{\infty}$ in $\mathcal{H}$, if each of the following conditions holds, then $(\{Uf_{k}\}_{k=1}^{\infty},\{Vg_{k}\}_{k=1}^{\infty})$ is a Parseval biframe if and only if  $VU^{*}=\textit{I}.$
\begin{enumerate}
\item[(i)] $\{f_{k}\}_{k=1}^{\infty}$ and $\{g_{k}\}_{k=1}^{\infty}$ are dual frames in $\mathcal{H}.$
\item[(ii)]  $\{f_{k}\}_{k=1}^{\infty}$ and $\{g_{k}\}_{k=1}^{\infty}$ are biorthogonal frames for $\mathcal{H}.$
\end{enumerate}
Moreover, if $\{e_{k}\}_{k=1}^{\infty}$ is an orthonormal basis for $\mathcal{H}$, then
$(\{Ue_{k}\}_{k=1}^{\infty},\{Ve_{k}\}_{k=1}^{\infty})$ is a Parseval biframe for $\mathcal{H}$ if and only if $ VU^{*}=\textit{I}.$
\end{cor}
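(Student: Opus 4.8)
The plan is to derive all three parts from the final assertion of \emph{Theorem} \ref{29}, namely that when $(\{f_k\}, \{g_k\})$ is already a Parseval biframe, the pair $(\{Uf_k\}, \{Vg_k\})$ is a Parseval biframe if and only if $VU^* = \textit{I}$. So the whole burden is to recognize that, under each of the hypotheses (i), (ii), and the orthonormal-basis case, the underlying pair is itself a Parseval biframe; then the final part of \emph{Theorem} \ref{29} applies verbatim. I would state this reduction explicitly at the outset so the reader sees there is essentially one thing to check three times.

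For the orthonormal basis case, if $\{e_k\}_{k=1}^\infty$ is an orthonormal basis then for every $f \in \mathcal{H}$ we have $\sum_{k=1}^\infty \langle f, e_k\rangle \langle e_k, f\rangle = \sum_{k=1}^\infty |\langle f, e_k\rangle|^2 = \|f\|^2$ by Parseval's identity, so $(\{e_k\}, \{e_k\})$ is trivially a Parseval biframe; equivalently $S_{\{e_k\},\{e_k\}} = \textit{I}$. Then apply the ``in particular'' clause of \emph{Theorem} \ref{29}. For part (i), if $\{f_k\}$ and $\{g_k\}$ are dual frames, then condition (iii) in the definition of dual frames gives $\langle f, g\rangle = \sum_{k=1}^\infty \langle f, f_k\rangle \langle g_k, g\rangle$ for all $f, g \in \mathcal{H}$; taking $g = f$ yields $\|f\|^2 = \sum_{k=1}^\infty \langle f, f_k\rangle \langle g_k, f\rangle$, so $(\{f_k\}, \{g_k\})$ is a Parseval biframe with $S_{F,G} = \textit{I}$, and again the final part of \emph{Theorem} \ref{29} applies. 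For part (ii), I would note that biorthogonal frames $\langle f_k, g_j\rangle = \delta_{k,j}$ that are simultaneously frames must be dual frames — a biorthogonal Riesz-type pair reconstructs every element, so condition (iii) holds — and then part (ii) collapses into part (i). (If one prefers to avoid invoking that fact, one can instead observe directly that for biorthogonal frames the analysis operator of one is the inverse adjoint of the synthesis operator of the other, forcing $S_{F,G} = \textit{I}$.)

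The only mildly delicate point — and the step I expect to be the main obstacle — is justifying in part (ii) that a biorthogonal pair of frames is automatically a pair of dual frames. Biorthogonality alone does not force the Bessel/frame structure, but once both sequences are assumed to be frames, biorthogonality pins down the cross-Gram operator to be the identity, hence $S_{F,G} = \textit{I}$; the cleanest route is to write, for the synthesis operator $T_F$ of $\{f_k\}$ and analysis operator $T_G^*$ of $\{g_k\}$, that $\langle T_F T_G^* f, g \rangle = \sum_k \langle f, g_k\rangle\langle f_k, g\rangle$ and use a density/basis argument together with biorthogonality to identify this with $\langle f, g\rangle$. Everything else is a direct citation of \emph{Theorem} \ref{29}, so the corollary follows immediately once the Parseval-biframe status of the base pair is established in each case.
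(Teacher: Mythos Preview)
Your proposal is correct and mirrors the paper's overall strategy exactly: in each of the three cases, verify that the base pair $(\{f_k\},\{g_k\})$ is itself a Parseval biframe (equivalently $S_{F,G}=\textit{I}$), and then invoke the final clause of \emph{Theorem}~\ref{29}. Your treatment of part~(i) and of the orthonormal-basis case is identical to the paper's.

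The only point of divergence is part~(ii). You reduce it to part~(i) by arguing that biorthogonal frames are automatically dual frames, via the density argument $\langle S_{F,G}f, f_j\rangle = \sum_k \langle f,f_k\rangle\langle g_k,f_j\rangle = \langle f,f_j\rangle$ together with completeness of $\{f_j\}$. The paper instead proves the Parseval identity directly, without passing through duality: it expands $f=\sum_i\langle f,S_F^{-1}f_i\rangle f_i$ inside the second factor of the biframe sum, uses $\langle g_k,f_i\rangle=\delta_{k,i}$ to collapse the double sum, and arrives at $\sum_k\langle f,f_k\rangle\langle S_F^{-1}f_k,f\rangle=\Vert f\Vert^2$. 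Your route is conceptually cleaner and folds (ii) into (i); the paper's computation is more self-contained and avoids citing the ``biorthogonal frames are dual'' fact. Both are sound.
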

\begin{proof}
Each of the conditions (i) and (ii) implies that   $(\{f_{k}\}_{k=1}^{\infty},\{g_{k}\}_{k=1}^{\infty})$ is a Parseval biframe. The proof of part (i) is clear. To see the part (ii), if $\{f_{k}\}_{k=1}^{\infty},\{g_{k}\}_{k=1}^{\infty}$ are biorthogonal frames, then for $f\in\mathcal{H}$, 
\begin{align*}
\sum_{k=1}^{\infty}\langle f,f_k\rangle\langle g_k,f\rangle &=\sum_{k=1}^{\infty}\langle f,f_k\rangle\langle g_k,\sum_{i=1}^{\infty}\langle f,S_{F}^{-1}f_i\rangle f_i\rangle\\
&=\sum_{k=1}^{\infty}\sum_{i=1}^{\infty}\langle f,f_k\rangle\langle S_F^{-1}f_i,f\rangle\langle g_k,f_i\rangle\\
&=\sum_{k=1}^{\infty}\sum_{i=1}^{\infty}\langle f,f_k\rangle\langle S_F^{-1}f_i,f\rangle{\delta}_{k,i}\\
&=\sum_{k=1}^{\infty}\langle f,f_k\rangle\langle S_F^{-1}f_k,f\rangle ={\Vert f\Vert}^{2}.
\end{align*}
Now, the desired result follows from the final assertion of \emph{Theorem} \ref{29}.\\
Also, the pair $(\{e_{k}\}_{k=1}^{\infty},\{e_{k}\}_{k=1}^{\infty})$ is a Parseval biframe and again, by the final assertion of \emph{Theorem} \ref{29}, the statement is true.
\end{proof}
\section{Charactrization of biframes}
\ In this section, we characterize the biframes whose constituent sequences are Bessel sequences, frames, and Riesz bases. Also, we obtain some results concerning Parseval biframes. 
  \begin{theorem}\label{33}
Two Bessel sequences $F=\{f_{k}\}_{k=1}^{\infty}$ and $G=\{g_{k}\}_{k=1}^{\infty}$ form a biframe for $\mathcal{H}$,  if and only if there exist operators $Q\in B^{+}_{b.b.}(\mathcal{H})$ and $T,W\in B(\mathcal{H})$ such that $TW^{*}=\textit{I}$, $f_{k}={Q}^{r}W e_{k}$, and $g_{k}={Q}^{t}T e_{k}$, for all $k\in\mathbb{N}$, whenever $r,t\in\mathbb{R}$ and $r+t=1$ and $E=\{e_k\}_{k=1}^{\infty}$ is an orthonormal basis for $\mathcal{H}$. 
\end{theorem}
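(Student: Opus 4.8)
The plan is to connect the biframe operator $S_{F,G}$ to the operators $Q$, $T$, $W$ via the factorization already available in the paper, namely \emph{Proposition} \ref{4} and \emph{Theorem} \ref{29}. First I would assume $F$ and $G$ are Bessel sequences forming a biframe. Then the synthesis operators $D_F, D_G : \ell^2 \to \mathcal{H}$ given by $D_F\{c_k\} = \sum c_k f_k$ and $D_G\{c_k\} = \sum c_k g_k$ are bounded, and if $E = \{e_k\}$ is a fixed orthonormal basis of $\mathcal{H}$ these can be regarded as bounded operators $U, V \in B(\mathcal{H})$ with $f_k = Ue_k$, $g_k = Ve_k$ (via the canonical identification $\mathcal{H}\cong\ell^2$ through $E$). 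Then $(\{Uf_k'\},\{Vg_k'\})$ with $f_k' = g_k' = e_k$ shows the pair $(UE, VE) = (F,G)$; so I am precisely in the situation of \emph{Theorem} \ref{29}(i) applied to the (Parseval) biframe $(E,E)$, whose biframe operator is $S_{E,E} = \mathcal{I}$. Invoking \emph{Theorem} \ref{29}(i) with $S_{F,G}$ there replaced by $\mathcal{I}$ then yields $Q \in B^+_{b.b.}(\mathcal{H})$ and $T,W \in B(\mathcal{H})$ with $TW^* = \mathcal{I}$ and $U = Q^r W$, $V = Q^t T$ for any $r+t=1$; evaluating at $e_k$ gives $f_k = Q^r W e_k$ and $g_k = Q^t T e_k$, as desired.

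For the converse, suppose such $Q, T, W, r, t$ and an orthonormal basis $E$ are given, and set $U = Q^r W$, $V = Q^t T$, both bounded, so that $F = UE$ and $G = VE$ are Bessel. Since $(E,E)$ is a Parseval biframe with $S_{E,E} = \mathcal{I}$, the "conversely" direction of \emph{Theorem} \ref{29}(i) (with $p = q = 1/2$, say, or any admissible choice, and the base biframe operator equal to $\mathcal{I}$) tells us that $(\{Ue_k\},\{Ve_k\}) = (F,G)$ is a biframe for $\mathcal{H}$ — because $Q = VS_{E,E}U^* = VU^*$ is of the form of a biframe operator and lies in $B^+_{b.b.}(\mathcal{H})$, and \emph{Theorem} \ref{32} then finishes the argument.

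Alternatively, and perhaps more transparently, I would give a self-contained version of the same computation: with $U = Q^r W$ and $V = Q^t T$ one checks directly that for every $f \in \mathcal{H}$,
\begin{equation*}
\sum_{k=1}^{\infty}\langle f, f_k\rangle\langle g_k, f\rangle = \sum_{k=1}^\infty \langle f, Ue_k\rangle\langle Ve_k, f\rangle = \sum_{k=1}^\infty \langle U^*f, e_k\rangle\langle e_k, V^*f\rangle = \langle U^*f, V^*f\rangle = \langle VU^*f, f\rangle,
\end{equation*}
and since $VU^* = Q^t T W^* Q^r = Q^t Q^r = Q$ (using $TW^* = \mathcal{I}$ and $r+t=1$, noting powers of the positive operator $Q$ commute), this equals $\langle Qf, f\rangle$; as $Q \in B^+_{b.b.}(\mathcal{H})$ this is pinched between $A\|f\|^2$ and $B\|f\|^2$, giving the biframe inequality. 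The forward direction is obtained by reversing this: $S_{F,G} = VU^*$ is positive and bounded below by \emph{Theorem} \ref{32}, hence lies in $B^+_{b.b.}(\mathcal{H})$, and \emph{Proposition} \ref{4} (with $S_1 = \mathcal{I}$, $S_2 = S_{F,G}$) supplies the factorization $U = S_{F,G}^r W$, $V = S_{F,G}^t T$ with $TW^* = \mathcal{I}$; take $Q = S_{F,G}$.

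The main obstacle I anticipate is bookkeeping rather than conceptual: one must be careful that representing the Bessel synthesis operators as genuine bounded operators $U, V$ on $\mathcal{H}$ (not merely on $\ell^2$) uses the fixed orthonormal basis $E$ in an essential way, and that the exponents $p, q$ appearing in \emph{Theorem} \ref{29} and \emph{Proposition} \ref{4} play no role here because the "inner" biframe operator is the identity — so one should either fix $p = q = 1/2$ or remark that $\mathcal{I}^{-p} = \mathcal{I}$ for all $p$. A secondary point to handle cleanly is the commutativity $Q^tQ^r = Q^{r+t} = Q$ for the positive operator $Q$, and the fact that $Q$ being self-adjoint (automatic in the complex setting, cf. \emph{Remark} \ref{67}) is what makes $\langle VU^*f,f\rangle$ real so that the biframe inequality even makes sense.
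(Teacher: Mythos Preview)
Your proposal is correct and follows essentially the same route as the paper: represent the Bessel sequences as $f_k = Ue_k$, $g_k = Ve_k$ for bounded $U,V$, then apply \emph{Theorem}~\ref{29} to the Parseval biframe $(E,E)$ (whose biframe operator is $\mathcal{I}$, killing the $S_{F,G}^{-p}$, $S_{F,G}^{-q}$ factors) to extract $Q = S_{F,G}$, $T$, $W$; for the converse, the paper performs exactly your ``self-contained'' computation, obtaining $\sum_k \langle f,f_k\rangle g_k = Q^t T W^* Q^r f = Qf$ and then invoking \emph{Theorem}~\ref{32}. Your side remarks about $\mathcal{I}^{-p}=\mathcal{I}$ and the commutation $Q^tQ^r=Q^{r+t}$ are precisely the small bookkeeping points the paper glosses over.
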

\begin{proof}
Let $(F,G)=(\{f_{k}\}_{k=1}^{\infty},\{g_{k}\}_{k=1}^{\infty})$ be a biframe with biframe operator $S_{F,G}$, formed by Bessel sequences $\{f_{k}\}_{k=1}^{\infty}$ and $\{g_{k}\}_{k=1}^{\infty}$. Since $\{f_{k}\}_{k=1}^{\infty}$ and $ \{g_{k}\}_{k=1}^{\infty}$ are Bessel sequences, there exist operators $U,V\in B(\mathcal{H})$ and the orthonormal basis $E=\{e_k\}_{k=1}^{\infty}$ for $\mathcal{H}$, such that 
\begin{equation}\label{16}
F=f_{k}=Ue_{k}=UE\ \text{and}\ G=g_{k}=Ve_{k}=VE,\ \ \ \forall k\in\mathbb{N}.
\end{equation}
For the Parseval biframe $(\{e_{k}\}_{k=1}^{\infty},\{e_{k}\}_{k=1}^{\infty})$, \emph{Theorem} \ref{29} gives us operators 
 $T,W\in B(\mathcal{H})$ such that $TW^{*}=\textit{I}$, and by (\ref{14}),
$$ U=S^{r}_{UE,VE}W\ ,\ V=S^{t}_{UE,VE}T,$$ 
whenever $r,t\in\mathbb{R}$ and $r+t=1$. Now, by (\ref{16}), we can write
\begin{equation*}
f_{k}=Ue_{k}=S^{r}_{UE,VE}W e_{k}=S^{r}_{F,G}W e_{k},
\end{equation*}
and
\begin{equation}\label{17}
g_{k}=Ve_{k}=S^{t}_{UE,VE}T e_{k}=S^{t}_{F,G}T e_{k},\ \ \ \forall k\in\mathbb{N}.
\end{equation}
Conversely, suppose that  there exist an orthonormal basis $\{e_{k}\}_{k=1}^{\infty}$ and operators $Q\in B^{+}_{b.b.}(\mathcal{H})$ and $T,W\in B(\mathcal{H})$ such that $TW^{*}=\textit{I}$, $f_{k}={Q}^{r}W e_{k}$,  and $g_{k}={Q}^{t}T e_{k}$, for $k\in\mathbb{N}$, whenever $r,t\in\mathbb{R}$ and $r+t=1.$ For $f\in\mathcal{H}$, one obtains
\begin{align*}
\sum_{k=1}^{\infty}\langle f,f_{k}\rangle g_{k}&=\sum_{k=1}^{\infty}\langle f,{Q}^{r}We_{k}\rangle {Q}^{t}Te_{k}\\
&={Q}^{t}T\sum_{k=1}^{\infty}\langle {W}^{*}{Q}^{r}f,e_{k}\rangle e_{k}\\
&={Q}^{t}T{W}^{*}{Q}^{r}f \\ 
&={Q}^{t+r}f=Qf.
\end{align*}
These relations and \emph{Theorem} \ref{32}, show that $(\{f_{k}\}_{k=1}^{\infty},\{g_{k}\}_{k=1}^{\infty})$ is a biframe for $\mathcal{H}.$
\end{proof}
\begin{cor}
Two Bessel sequences $\{f_{k}\}_{k=1}^{\infty}$ and $\{g_{k}\}_{k=1}^{\infty}$ form a Parseval biframe for $\mathcal{H}$ if and only if 
there exist operators $T,W\in B(\mathcal{H})$ such that $T{W}^{*}=\textit{I}$, 
 $f_{k}=We_{k}$, and $g_{k}=Te_{k}$, for $k\in\mathbb{N}$, whenever $\{e_{k}\}_{k=1}^{\infty}$ is an orthonormal basis for $\mathcal{H}$.\\
 Furthermore, each of the above assertions implies that $\{f_{k}\}_{k=1}^{\infty}$ and $\{g_{k}\}_{k=1}^{\infty}$ are dual frames .
\end{cor}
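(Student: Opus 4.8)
The plan is to specialize Theorem \ref{33} to the Parseval case and then extract the dual-frame consequence. The first equivalence is essentially \emph{Theorem} \ref{33} with $Q=\mathcal{I}$. Indeed, a Parseval biframe is a biframe with $A=B=1$, so its biframe operator $S_{F,G}$ satisfies $\langle S_{F,G}f,f\rangle={\Vert f\Vert}^2$ for all $f$, which (since $S_{F,G}$ is positive, hence self-adjoint in our complex Hilbert space) forces $S_{F,G}=\mathcal{I}$. Conversely, if $S_{F,G}=\mathcal{I}$ then $(F,G)$ is trivially Parseval. So I would first record that among biframes, the Parseval ones are exactly those with $S_{F,G}=\mathcal{I}$. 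Feeding $Q=\mathcal{I}$ into Theorem \ref{33} and using $\mathcal{I}^r=\mathcal{I}^t=\mathcal{I}$ collapses the representations $f_k=Q^rWe_k$, $g_k=Q^tTe_k$ to $f_k=We_k$, $g_k=Te_k$, with the side condition $TW^*=\mathcal{I}$ unchanged and the now-vacuous constraint $r+t=1$ droppable. This gives the stated characterization in one direction; for the converse one checks directly, exactly as in the proof of Theorem \ref{33}, that $\sum_k\langle f,We_k\rangle Te_k=TW^*f=f$, so $S_{F,G}=\mathcal{I}$ and $(F,G)$ is Parseval.

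For the final clause I would argue that $f_k=We_k$ and $g_k=Te_k$ with $TW^*=\mathcal{I}$ already make $\{f_k\}$ and $\{g_k\}$ Bessel sequences (they are bounded-operator images of an orthonormal basis) and dual to each other. The cleanest route is to use characterization (iii) of dual frames from Section 2: for $f,g\in\mathcal{H}$,
\begin{align*}
\sum_{k=1}^{\infty}\langle f,f_k\rangle\langle g_k,g\rangle
&=\sum_{k=1}^{\infty}\langle f,We_k\rangle\langle Te_k,g\rangle\\
&=\sum_{k=1}^{\infty}\langle W^*f,e_k\rangle\langle e_k,T^*g\rangle\\
&=\langle W^*f,T^*g\rangle=\langle TW^*f,g\rangle=\langle f,g\rangle,
\end{align*}
which is precisely condition (iii), so $\{f_k\}$ and $\{g_k\}$ are dual frames (both being Bessel, this also certifies the frame property). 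One should note that the reordering of the sum is legitimate because both $\{\langle W^*f,e_k\rangle\}$ and $\{\langle e_k,T^*g\rangle\}$ lie in $\ell^2$.

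I do not expect a genuine obstacle here: the corollary is a direct specialization plus a short Parseval-identity computation, all of whose ingredients (Theorem \ref{33}, the self-adjointness of positive operators on a complex Hilbert space from \emph{Remark} \ref{67}, the dual-frame criterion (iii)) are already available. The only point requiring a sentence of care is the passage from ``$(F,G)$ is a Parseval biframe'' to ``$S_{F,G}=\mathcal{I}$'': it uses that a positive operator with $\langle Sf,f\rangle={\Vert f\Vert}^2$ for all $f$ must be the identity, which follows from self-adjointness via polarization or from the spectral mapping applied to $S-\mathcal{I}\geq 0$ and $\mathcal{I}-S\geq 0$. With that in hand the two equivalences and the dual-frame statement fall out immediately.
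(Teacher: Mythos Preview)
Your proposal is correct and is exactly the specialization the paper intends: the paper gives no separate proof for this corollary, treating it as immediate from \emph{Theorem} \ref{33}, and your argument supplies precisely those details (take $Q=S_{F,G}=\mathcal{I}$ in the Parseval case, then verify duality via criterion (iii)). The only remark is that your justification of $S_{F,G}=\mathcal{I}$ from $\langle S_{F,G}f,f\rangle=\Vert f\Vert^2$ is the right place to invoke \emph{Remark} \ref{67}, as you do.
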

Here we recall the concept of generalazied dual frames for $\mathcal{H}$ (g-dual frames). In 2013, Dehghan and Hasankhani \cite{Dehghan4} introduced the concept of g-dual frames as follows.\\
A frame  $\{g_{k}\}_{k=1}^{\infty}$ is a g-dual frame of frame $\{f_{k}\}_{k=1}^{\infty}$ for $\mathcal{H}$, if there exists an invertible operator $A\in B(\mathcal{H})$ such that for all $f\in\mathcal{H}$, the equality $f=\sum_{k=1}^{\infty}\langle Af,g_k\rangle f_k$ is valid.

The reconstruction formula (\ref{68}), for the members of a Hilbert space by a given biframe\\ $(\{f_{k}\}_{k=1}^{\infty},\{g_{k}\}_{k=1}^{\infty})$ for $\mathcal{H}$, and the concept of g-dual frames for $\mathcal{H}$ give us this idea that, characterize the biframes which are formed by two frames, by using the characterization of g-dual frames.
\begin{theorem}\label{69}
Let $F=\{f_{k}\}_{k=1}^{\infty}$ be a frame for $\mathcal{H}$ with frame operator $S_F$. Then the following statements are equivalent.
\begin{enumerate}
\item[(i)] The sequence $\{g_k\}_{k=1}^\infty$ is a Bessel for $\mathcal{H}$ and the pair $(\{f_k\}_{k=1}^\infty,\{g_k\}_{k=1}^\infty)$ is a biframe for $\mathcal{H}$.
\item[(ii)] $\{g_k\}_{k=1}^\infty=\{({S_FQ})^{-1}f_k+h_k-\sum_{j=1}^\infty\langle S^{-1}_F f_k,f_j\rangle h_j\}_{k=1}^\infty$, for some $Q\in B^{+}_{b.b.}(\mathcal{H})$, where $\{h_k\}_{k=1}^\infty$ is a Bessel sequence in $\mathcal{H}$.
\item[(iii)]  The sequence $\{g_k\}_{k=1}^\infty$ is a frame for $\mathcal{H}$ and the pair $(\{f_k\}_{k=1}^\infty,\{g_k\}_{k=1}^\infty)$ is a biframe for $\mathcal{H}$.
\end{enumerate}
\end{theorem}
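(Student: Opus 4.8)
The plan is to prove the cycle $(i)\Rightarrow(ii)\Rightarrow(iii)\Rightarrow(i)$, using the reconstruction formula \eqref{68} together with the g-dual frame characterization recalled just before the statement. The central object is the biframe operator $S_{F,G}$, which by \emph{Theorem} \ref{31} lies in $GL^{+}(\mathcal{H})$ whenever $(F,G)$ is a biframe; I will repeatedly write $Q=S_{F,G}$ for this operator (so $Q\in B^{+}_{b.b.}(\mathcal{H})$).

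First, for $(i)\Rightarrow(ii)$: assume $\{g_k\}$ is Bessel and $(F,G)$ is a biframe with operator $Q=S_{F,G}\in B^{+}_{b.b.}(\mathcal{H})$. By \eqref{13} applied with $U=\mathcal I$ (or directly), $S_{F,G}f=\sum_k\langle f,f_k\rangle g_k$; I want to invert this relation to express $g_k$ in terms of $f_k$. The idea is that $\{g_k\}$ is a g-dual-type partner of $\{f_k\}$ through $Q$: since $f=S_F^{-1}S_F f=\sum_j\langle f,S_F^{-1}f_j\rangle f_j$ and also $f=Q^{-1}S_{F,G}f=\sum_j\langle f,Q^{-1}f_j\rangle g_j$ after using $S_{F,G}^{*}=S_{G,F}$ and \eqref{68}, I can compare these two expansions. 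Concretely, the synthesis operator of $\{g_k\}$ composed with the analysis operator of $\{f_k\}$ equals $Q$, so $\{g_k\}$ differs from the canonical choice $\{(S_FQ)^{-1}f_k\}$ — wait, I should check the exact operator bookkeeping — by an element of the kernel of the synthesis operator of $\{f_k\}$, and every such perturbation has precisely the form $h_k-\sum_j\langle S_F^{-1}f_k,f_j\rangle h_j$ for some Bessel $\{h_k\}$; this is the standard parametrization of all sequences $\{g_k\}$ with a prescribed "cross operator" against a frame $\{f_k\}$ (the term $\sum_j\langle S_F^{-1}f_k,f_j\rangle h_j$ being the projection of $h_k$ onto the part that the analysis operator of $\{f_k\}$ sees). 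Verifying that $\{(S_FQ)^{-1}f_k\}_{k}$ is itself Bessel and yields cross operator $Q$ is a direct computation: $\sum_k\langle f,f_k\rangle (S_FQ)^{-1}f_k=(QS_F)^{-1}\sum_k\langle f,f_k\rangle f_k=(QS_F)^{-1}S_Ff=Q^{-1}f$, hmm, that gives $Q^{-1}$, not $Q$, so the canonical representative should instead be $\{S_FQ f_k\}$ or I have the operator on the wrong side — I would sort out which one-sided placement of $S_F$ and $Q$ makes $\sum_k\langle f,f_k\rangle g_k^{\mathrm{can}}=Qf$ hold, and present $(ii)$ accordingly (the statement as written with $(S_FQ)^{-1}$ must be read with the correct convention, and I would make the computation match it). For $(ii)\Rightarrow(iii)$: given the explicit formula for $\{g_k\}$, compute $\sum_k\langle f,f_k\rangle g_k$ and check directly that the $h_k$-terms cancel (the sum $\sum_k\langle f,f_k\rangle(h_k-\sum_j\langle S_F^{-1}f_k,f_j\rangle h_j)$ vanishes because $\sum_k\langle f,f_k\rangle\langle S_F^{-1}f_k,f_j\rangle=\langle S_F^{-1}S_F f,f_j\rangle\cdot$ — more precisely $\sum_k\langle f,f_k\rangle\overline{\langle f_j,S_F^{-1}f_k\rangle}=\langle f,f_j\rangle$ after using self-adjointness of $S_F^{-1}$), leaving $\sum_k\langle f,f_k\rangle g_k=Qf$; since $Q\in B^{+}_{b.b.}(\mathcal{H})$, \emph{Theorem} \ref{32} gives that $(F,G)$ is a biframe, and then \emph{Theorem} \ref{39} — no, that is for Riesz bases; instead I would note directly that $\{g_k\}$ is a frame because it is Bessel (sum of a Bessel sequence and a bounded-operator image of one) and its synthesis operator is onto (as $Qf=\sum_k\langle f,f_k\rangle g_k$ has closed dense range, being invertible), hence a frame. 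Finally $(iii)\Rightarrow(i)$ is immediate since a frame is in particular a Bessel sequence.

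The main obstacle I expect is $(i)\Rightarrow(ii)$: pinning down the exact parametrization of all Bessel sequences $\{g_k\}$ satisfying $\sum_k\langle f,f_k\rangle g_k=Qf$ for a fixed $Q\in B^{+}_{b.b.}(\mathcal{H})$, and checking that the displayed formula in $(ii)$ — with its particular placement of $S_F$, $Q^{-1}$, and the correction term — is precisely that parametrization and not off by an adjoint or a one-sided inverse. This is essentially the classical "all duals of a frame are the canonical dual plus something annihilated by synthesis" argument of Christensen, transported through the extra operator $Q$; the bookkeeping is routine but error-prone, so I would do it carefully with the synthesis/analysis operators named explicitly rather than index-chasing. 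Everything else — the cancellation in $(ii)\Rightarrow(iii)$, the appeal to \emph{Theorem} \ref{32} for the biframe conclusion, and $(iii)\Rightarrow(i)$ — is short.
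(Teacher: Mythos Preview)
Your overall approach matches the paper's: the same cycle $(i)\Rightarrow(ii)\Rightarrow(iii)\Rightarrow(i)$, the same appeal to the g-dual frame characterization for $(i)\Rightarrow(ii)$, the same direct cancellation computation for $(ii)\Rightarrow(iii)$, and the trivial $(iii)\Rightarrow(i)$.

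The one concrete point to fix is your identification of $Q$. You set $Q=S_{F,G}$, and this is what produces the mismatch you notice (``hmm, that gives $Q^{-1}$, not $Q$''). In the paper's convention the operator $Q$ appearing in $(ii)$ is $S_{F,G}^{-1}$, not $S_{F,G}$: from the g-dual characterization one gets
\[
g_k=(S_{F,G}S_F^{-1})f_k+h_k-\sum_{j}\langle S_F^{-1}f_k,f_j\rangle h_j
     =(S_F S_{F,G}^{-1})^{-1}f_k+h_k-\cdots,
\]
so $(S_FQ)^{-1}=(S_F S_{F,G}^{-1})^{-1}$, i.e.\ $Q=S_{F,G}^{-1}\in B^{+}_{b.b.}(\mathcal{H})$. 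With this identification your own computation
\[
\sum_{k}\langle f,f_k\rangle (S_FQ)^{-1}f_k=(S_FQ)^{-1}S_Ff=Q^{-1}f
\]
is exactly what is wanted: the biframe operator is $Q^{-1}=S_{F,G}$, and there is no need to rearrange the formula in $(ii)$ or move $S_F$ to the other side. The same correction applies in your $(ii)\Rightarrow(iii)$ step: after the $h_k$-terms cancel you are left with $\sum_k\langle f,f_k\rangle g_k=Q^{-1}f$ (not $Qf$), and since $Q^{-1}\in B^{+}_{b.b.}(\mathcal{H})$ you invoke \emph{Theorem}~\ref{32} as planned. The frame conclusion for $\{g_k\}$ then follows exactly as you outline (Bessel with surjective synthesis); the paper phrases this last step as an appeal to a lemma on g-duals, but your direct argument is equivalent.
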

\begin{proof}
For the proof of $(i)\Rightarrow (ii)$, suppose that $\{g_{k}\}_{k=1}^{\infty}$ is a frame for $\mathcal{H}$ and $(\{f_k\}_{k=1}^\infty,\{g_k\}_{k=1}^\infty)$ is a biframe for $\mathcal{H}$ with biframe operator $S_{F,G}$. By the reconstruction formula (\ref{68}) and according to this point that $S_{F,G}=S_{G,F}$, for $f\in\mathcal{H}$, we have
$$f=\sum_{k=1}^\infty\langle S^{-1}_{F,G}f,f_k\rangle g_k=\sum_{k=1}^\infty\langle S^{-1}_{F,G}f,g_k\rangle f_k.$$
Therefore, $\{f_k\}_{k=1}^\infty$ and $\{g_k\}_{k=1}^\infty$ are g-dual frames. By using the characterization of g-dual frames presented in \cite{Dehghan4}, every $g_k$ has the following form.
 \begin{align*}
 g_{k}&=(S_{F,G}S^{-1}_F)f_k+h_k-\sum_{j=1}^\infty\langle S^{-1}_F f_k,f_j\rangle h_j\\
 &={(S_FS^{-1}_{F,G})}^{-1}f_k+h_k-\sum_{j=1}^\infty\langle S^{-1}_F f_k,f_j\rangle h_j,
 \end{align*}
where the sequence $\{h_k\}_{k=1}^\infty$ is a Bessel sequence in $\mathcal{H}$.\\
For the proof of $(ii)\Rightarrow(i)$, assume that there exists an operator $Q\in B^{+}_{b.b.}(\mathcal{H})$ such that\\ $\{g_{k}\}_{k=1}^{\infty}=\{({S_FQ})^{-1}f_k+h_k-\sum_{j=1}^\infty\langle S^{-1}_F f_k,f_j\rangle h_j\}_{k=1}^\infty,$
where $\{h_k\}_{k=1}^\infty$ is a Bessel sequence in $\mathcal{H}$. For $f\in\mathcal{H}$, we have
\begin{align*}
\sum_{k=1}^\infty\langle f,f_k\rangle g_k &=\sum_{k=1}^\infty\langle f,f_k\rangle  \big({(S_FQ)}^{-1}f_k+h_k-\sum_{j=1}^\infty\langle S^{-1}_F f_k,f_j\rangle h_j\big)\\
&=Q^{-1}S^{-1}_F\sum_{k=1}^\infty\langle f,f_k\rangle f_k+\sum_{k=1}^\infty\langle f,f_k\rangle h_k-\sum_{k=1}^\infty\sum_{j=1}^\infty\langle f,f_k\rangle\langle S^{-1}_Ff_k,f_j\rangle h_j\\
&=Q^{-1}f+\sum_{k=1}^\infty\langle f,f_k\rangle h_k-\sum_{k=1}^\infty\sum_{j=1}^\infty\langle f,f_k\rangle\langle S^{-1}_Ff_k,f_j\rangle h_j\\
&=Q^{-1}f+\sum_{k=1}^\infty\langle f,f_k\rangle h_k-\sum_{j=1}^\infty\langle f,f_j\rangle h2_j=Q^{-1}f.
\end{align*}
Since $Q^{-1}\in B^{+}_{b.b.}(\mathcal{H})$, gives the result that $(\{f_k\}_{k=1}^\infty,\{g_k\}_{k=1}^\infty)$ is a biframe for $\mathcal{H}$. Simple calculations show that the sequence $\{g_k\}_{k=1}^\infty$ is a Bessel sequence. The above calculations derive that $f=\sum_{k=1}^\infty\langle Qf,f_k\rangle g_k$. Then, $\{g_k\}_{k=1}^\infty$ is a frame for $\mathcal{H}$ by (\cite{Dehghan4}, Lemma 2.1).\\
$(i)$ is an obvious consequence of $(iii)$.
\end{proof}
\begin{theorem}
Let $F=\{f_{k}\}_{k=1}^{\infty}$ be a Riesz basis for $\mathcal{H}$ with frame operator $S_F$. Then the following statements are equivalent.
\begin{enumerate}
\item[(i)] The pair $(\{f_k\}_{k=1}^\infty,\{g_k\}_{k=1}^\infty)$ is a biframe for $\mathcal{H}$.
\item[(ii)] $\{g_k\}_{k=1}^\infty=\{({S_FQ})^{-1}f_k\}$ for some $Q\in B^{+}_{b.b.}(\mathcal{H})$.\\
In particular, each of conditions $(i)$ and $(ii)$ implies that  $\{g_k\}_{k=1}^\infty$ is a Riesz basis for $\mathcal{H}$.
\end{enumerate}
\end{theorem}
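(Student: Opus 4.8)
The plan is to prove the equivalence $(i)\Leftrightarrow(ii)$ by exploiting the special structure of a Riesz basis, namely that $F=\{f_k\}_{k=1}^\infty=\{Ve_k\}_{k=1}^\infty$ for some $V\in GL(\mathcal{H})$ and an orthonormal basis $\{e_k\}_{k=1}^\infty$, so that the frame operator is $S_F=VV^*$. For the direction $(i)\Rightarrow(ii)$, I would start from the biframe operator $S_{F,G}$, which by \emph{Theorem} \ref{31} lies in $GL^+(\mathcal{H})$, and compute $S_{F,G}e_k$-type expressions. Since $\{f_k\}=\{Ve_k\}$, the reconstruction formula (\ref{68}) together with $S_{F,G}=S_{G,F}$ should let me solve explicitly for $g_k$. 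Concretely, applying the argument already used in the proof of \emph{Theorem} \ref{39}: define $U:=S_{F,G}(V^*)^{-1}$, so that $Ue_k=g_k$ for all $k$; then express $U$ in terms of $S_F$ and $Q$ by setting $Q:=(S_F S_{F,G}^{-1})^{-1}=S_{F,G}S_F^{-1}$, which is a product giving an element of $B^+_{b.b.}(\mathcal{H})$ once one checks positivity and boundedness below, and verify $g_k=(S_FQ)^{-1}f_k$ by direct substitution.

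For the converse $(ii)\Rightarrow(i)$, the computation should be essentially the one appearing in the proof of \emph{Theorem} \ref{69}, but simpler because there are no $h_k$ correction terms: for $f\in\mathcal{H}$,
\begin{align*}
\sum_{k=1}^\infty\langle f,f_k\rangle g_k
&=\sum_{k=1}^\infty\langle f,f_k\rangle (S_FQ)^{-1}f_k\\
&=Q^{-1}S_F^{-1}\sum_{k=1}^\infty\langle f,f_k\rangle f_k
=Q^{-1}S_F^{-1}S_F f=Q^{-1}f.
\end{align*}
Since $Q\in B^+_{b.b.}(\mathcal{H})\subset GL^+(\mathcal{H})$, its inverse $Q^{-1}$ is again positive and bounded below, so $S_{F,G}=Q^{-1}$ is positive and bounded below, and \emph{Theorem} \ref{32} gives that $(F,G)$ is a biframe. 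One small subtlety to handle carefully is the interchange of summation with the bounded operator $Q^{-1}S_F^{-1}$ and the convergence of $\sum_k\langle f,f_k\rangle f_k=S_Ff$, which is legitimate because $\{f_k\}$ is a frame (indeed a Riesz basis), so no issue arises there.

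Finally, for the ``in particular'' clause, I would observe that $\{g_k\}=\{(S_FQ)^{-1}f_k\}=\{(S_FQ)^{-1}Ve_k\}$, and since $(S_FQ)^{-1}V=(VV^*Q)^{-1}V$ is a composition of invertible bounded operators, it lies in $GL(\mathcal{H})$; hence $\{g_k\}$ is the image of the orthonormal basis $\{e_k\}$ under an element of $GL(\mathcal{H})$, which is exactly the definition of a Riesz basis. Alternatively, this also follows immediately from \emph{Theorem} \ref{39} once we know $(F,G)$ is a biframe and $F$ is a Riesz basis.

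The main obstacle I anticipate is the bookkeeping needed to certify that the operator $Q$ produced in the $(i)\Rightarrow(ii)$ direction genuinely belongs to $B^+_{b.b.}(\mathcal{H})$: $S_{F,G}S_F^{-1}$ is a product of two self-adjoint positive operators and need not be self-adjoint a priori, so one must argue via a similarity/conjugation trick (e.g., $S_{F,G}S_F^{-1}$ is similar to $S_F^{-1/2}S_{F,G}S_F^{-1/2}$, which is manifestly positive, self-adjoint and bounded below) or else appeal to \emph{Proposition} \ref{4} with the Parseval biframe $(\{e_k\},\{e_k\})$ to obtain the representation directly and read off $Q$ as a bona fide biframe operator. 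Everything else is routine manipulation of the kind already carried out in the proofs of \emph{Theorems} \ref{39} and \ref{69}.
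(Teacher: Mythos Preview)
Your overall strategy matches the paper's proof: write $f_k=Ve_k$, use the computation from \emph{Theorem}~\ref{39} to get $g_k=S_{F,G}(V^*)^{-1}e_k=S_{F,G}(VV^*)^{-1}f_k=S_{F,G}S_F^{-1}f_k=(S_FS_{F,G}^{-1})^{-1}f_k$, and handle the converse by the direct computation you wrote out (which is exactly what the paper calls ``easily obtained'').

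There is, however, a genuine bookkeeping error in your $(i)\Rightarrow(ii)$ step that creates the fake ``main obstacle'' you worry about. You set $Q:=(S_FS_{F,G}^{-1})^{-1}=S_{F,G}S_F^{-1}$, but that is the operator $(S_FQ)^{-1}$ appearing in front of $f_k$, not $Q$ itself. With your choice one gets $(S_FQ)^{-1}f_k=(S_FS_{F,G}S_F^{-1})^{-1}f_k=S_FS_{F,G}^{-1}S_F^{-1}f_k$, which is \emph{not} $g_k$ in general. The correct identification, and the one the paper makes, is simply
\[
Q:=S_{F,G}^{-1},
\]
so that $(S_FQ)^{-1}=(S_FS_{F,G}^{-1})^{-1}=S_{F,G}S_F^{-1}$ and $g_k=(S_FQ)^{-1}f_k$ holds on the nose. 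With this $Q$ the membership $Q\in B^+_{b.b.}(\mathcal{H})$ is immediate, since $S_{F,G}\in B^+_{b.b.}(\mathcal{H})\subset GL^+(\mathcal{H})$ by \emph{Theorem}~\ref{31} and the inverse of a positive invertible (hence self-adjoint) operator is again positive, self-adjoint and bounded below. Your anticipated difficulty---showing $S_{F,G}S_F^{-1}$ is self-adjoint/positive---is therefore an artifact of the mislabeling; indeed $S_{F,G}S_F^{-1}$ need not be self-adjoint at all (a product of two noncommuting positive operators generally is not), and the similarity argument you sketch cannot repair that. Once $Q$ is corrected, your proof is essentially identical to the paper's.
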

\begin{proof}
For the proof of $(i)\Rightarrow(ii)$, let $(\{f_k\}_{k=1}^\infty,\{g_k\}_{k=1}^\infty)$ be a biframe for $\mathcal{H}$ with biframe operator $S_{F,G}$. Since $\{f_k\}_{k=1}^\infty$ is a Riesz basis for $\mathcal{H}$, there is an invertible operator $V\in B(\mathcal{H})$ such that $f_k=Ve_k$, for every $k\in\mathbb{N}$, where $\{e_k\}_{k=1}^\infty$ is an orthonormal basis for $\mathcal{H}$. By \emph{Theorem} \ref{39}, the sequence $\{g_k\}_{k=1}^\infty$ is a Riesz basis for $\mathcal{H}$ and for every $k\in\mathbb{N}$,
\begin{align*}
g_k&=S_{F,G}{{V}^{*}}^{-1}e_k=S_{F,G}{V^{*}}^{-1}{V}^{-1}f_k\\
&=S_{F,G}{(V{V}^{*})}^{-1}f_{k}=S_{F,G}{S_F}^{-1}f_k\\
&={(S_F{S_{F,G}}^{-1})}^{-1}f_k.
\end{align*}
So $\{g_k\}_{k=1}^\infty$ is presented form in (ii) because $S_{F,G}\in B^{+}_{b.b.}(\mathcal{H})$.\\
The proof of $(ii)\Rightarrow(i)$ is easily obtained. 
\end{proof}
\section{B-Riesz bases}
\ Since orthonormal bases are among the most important sequences in $\mathcal{H}$, we study those biframes for which one of the constituent sequences is an orthonormal basis. We will see that these types of biframes have interesting properties that distinguish them from the pair frames having a similar property. In fact, by collecting all the sequences that form a biframe together with a given orthonormal basis in a set, new bases can be obtained that find a special place between the set of all orthonormal bases and the set of all Riesz bases. But, such results cannot be established for pair frames. We begin our study of this subject with a definition.
\begin{defn}\label{47}
We consider an orthonormal basis $\{e_k\}_{k=1}^\infty$ for $\mathcal{H}$ and define the set $[\{e_k\}]$ as the following.
\begin{center}
$[\{e_k\}]=\Big\{\{f_k\}_{k=1}^\infty\vert (\{e_k\}_{k=1}^\infty,\{f_k\}_{k=1}^\infty) $ is a biframe for $\mathcal{H}\Big\}$.
\end{center}
\end{defn}
The next proposition shows us the way the elements of the set $[\{e_k\}]$ can be represented.
\begin{prop}\label{48}
Let $E=\{e_{k}\}_{k=1}^{\infty}$ be an orthonormal basis for $\mathcal{H}$. The sequence $F=\{f_{k}\}_{k=1}^{\infty}$ belongs to $[\{e_k\}]$ if and only if there exists an operator $U\in {B}^{+}_{b.b.}(\mathcal{H})$ such that  $f_{k}=Ue_{k}$, for all $k\in\mathbb{N}$. 
\end{prop}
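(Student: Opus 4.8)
The plan is to characterize membership in $[\{e_k\}]$ by reading off what it means for $(\{e_k\}_{k=1}^\infty,\{f_k\}_{k=1}^\infty)$ to be a biframe and translating it into a statement about the biframe operator $S_{E,F}$. First I would suppose $F=\{f_k\}_{k=1}^\infty\in[\{e_k\}]$, so that by definition $(\{e_k\},\{f_k\})$ is a biframe for $\mathcal{H}$. By Theorem \ref{31}(i) (together with Theorem \ref{32}, recalling that $\mathcal H$ is complex by Remark \ref{67}, so positive operators are self-adjoint), the biframe operator $S_{E,F}$ is well-defined, bounded, positive, and invertible — hence in particular it lies in ${B}^{+}_{b.b.}(\mathcal{H})$. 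The key computation is then that for each $k\in\mathbb N$,
\begin{equation*}
S_{E,F}e_k=\sum_{j=1}^\infty\langle e_k,e_j\rangle f_j=f_k,
\end{equation*}
using that $\{e_j\}$ is orthonormal so $\langle e_k,e_j\rangle=\delta_{k,j}$. Setting $U:=S_{E,F}$ gives the desired operator with $f_k=Ue_k$ for all $k$.

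For the converse, I would assume there is an operator $U\in{B}^{+}_{b.b.}(\mathcal{H})$ with $f_k=Ue_k$ for all $k\in\mathbb N$, and show $(\{e_k\},\{f_k\})$ is a biframe. The natural route is via Theorem \ref{32}: compute the candidate biframe operator and check it is positive and bounded below. For $f\in\mathcal{H}$,
\begin{equation*}
\sum_{k=1}^\infty\langle f,e_k\rangle f_k=\sum_{k=1}^\infty\langle f,e_k\rangle Ue_k=U\Big(\sum_{k=1}^\infty\langle f,e_k\rangle e_k\Big)=Uf,
\end{equation*}
where the interchange of $U$ with the sum is justified by boundedness of $U$ and convergence of $\sum_k\langle f,e_k\rangle e_k=f$ in $\mathcal H$ (Parseval expansion in the orthonormal basis $\{e_k\}$). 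Thus the series defining $S_{E,F}f$ converges and equals $Uf$, so $S_{E,F}=U$ is a positive, bounded-below operator; Theorem \ref{32} then yields that $(\{e_k\},\{f_k\})$ is a biframe, i.e. $F\in[\{e_k\}]$.

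The argument is essentially a bookkeeping exercise once Theorems \ref{31} and \ref{32} are in hand; the only point requiring a little care is the convergence/interchange step in the converse (and, implicitly, that the map $f\mapsto Uf$ genuinely arises as the biframe operator, which is why Theorem \ref{32} — rather than a direct verification of the two-sided inequality \eqref{65} — is the cleanest tool). I do not expect any serious obstacle; the main thing to get right is to invoke the complex-Hilbert-space convention (Remark \ref{67}) so that positivity of $S_{E,F}$ gives self-adjointness, keeping the application of Theorem \ref{32} legitimate.
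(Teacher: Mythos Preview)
Your proposal is correct and follows essentially the same approach as the paper: in both directions you identify the biframe operator $S_{E,F}$ with the operator $U$, computing $S_{E,F}e_k=f_k$ for the forward implication and $S_{E,F}f=Uf$ for the converse, then invoke Theorem~\ref{32}. The only differences are cosmetic --- you spell out the interchange-of-sum justification and the appeal to Remark~\ref{67}, which the paper leaves implicit.
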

\begin{proof}
Let $\{f_{k}\}_{k=1}^{\infty}\in[\{e_k\}]$. Therefore, the pair $(\{e_{k}\}_{k=1}^{\infty},\{f_{k}\}_{k=1}^{\infty})$ is a biframe with biframe operator $S_{E,F}$. For $k\in\mathbb{N}$,
$$S_{E,F}e_{k}=\sum_{i=1}^{\infty}\langle e_{k},e_{i}\rangle f_{i}=f_{k},$$
and hence $f_{k}=S_{E,F}e_{k}$, for all $k\in\mathbb{N}.$\\
Conversely, suppose that there exists $U\in {B}^{+}_{b.b.}(\mathcal{H})$ such that $f_{k}=Ue_{k}$, for all $k\in\mathbb{N}$. For $f\in\mathcal{H}$,
$$Uf=\sum_{k=1}^{\infty}\langle f,e_{k}\rangle Ue_{k}=\sum_{k=1}^{\infty}\langle f,e_{k}\rangle f_{k}=Sf.$$
 This means that $S\in{B}^{+}_{b.b.}(\mathcal{H})$, and by \emph{Theorem} \ref{32},  $(\{e_{k}\}_{k=1}^{\infty}, \{f_{k}\}_{k=1}^{\infty})$ is a biframe for $\mathcal{H}$ with biframe operator $S$. Hence, $\{f_{k}\}_{k=1}^{\infty}\in[\{e_k\}]$.
\end{proof}
\begin{prop}\label{49}
Let $\{e_{k}\}_{k=1}^{\infty}$ be an orthonormal basis for $\mathcal{H}$, and consider the sequence $\{f_{k}\}_{k=1}^{\infty}\in[\{e_k\}]$. If there exists an orthonormal basis $\{\delta_k\}_{k=1}^{\infty}$ for $\mathcal{H}$ such that $\{f_{k}\}_{k=1}^{\infty}\in[\{\delta_k\}]$, then $e_k=\delta_k$, for all $k\in\mathbb{N}$.
\end{prop}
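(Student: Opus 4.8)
The plan is to invoke \emph{Proposition} \ref{48} twice to represent $\{f_k\}_{k=1}^{\infty}$ through both orthonormal bases, and then to extract the conclusion from a unitarity argument combined with the uniqueness of positive square roots. First, since $\{f_k\}_{k=1}^{\infty}\in[\{e_k\}]$, \emph{Proposition} \ref{48} supplies an operator $U\in B^{+}_{b.b.}(\mathcal{H})$ with $f_k=Ue_k$ for all $k$; likewise $\{f_k\}_{k=1}^{\infty}\in[\{\delta_k\}]$ yields $V\in B^{+}_{b.b.}(\mathcal{H})$ with $f_k=V\delta_k$ for all $k$. A self-adjoint operator that is bounded below is injective and has dense range (its range is the orthogonal complement of its kernel), hence is invertible; thus $U,V\in GL^{+}(\mathcal{H})$ (this is also contained in \emph{Theorem} \ref{31}).

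Next, I would set $W:=V^{-1}U\in GL(\mathcal{H})$. From $Ue_k=f_k=V\delta_k$ we get $We_k=\delta_k$ for every $k\in\mathbb{N}$. Since $W$ is bounded and carries the orthonormal basis $\{e_k\}_{k=1}^{\infty}$ onto the orthonormal basis $\{\delta_k\}_{k=1}^{\infty}$, it is a surjective isometry — for $f=\sum_{k}\langle f,e_k\rangle e_k$ one has $Wf=\sum_k\langle f,e_k\rangle\delta_k$, so $\Vert Wf\Vert^2=\sum_k|\langle f,e_k\rangle|^2=\Vert f\Vert^2$, and its range contains every $\delta_k$ — hence $W$ is unitary, i.e. $W^{*}W=\mathcal{I}$. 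Using that $U$, $V$, and therefore $V^{-1}$, are self-adjoint, $W^{*}W=(V^{-1}U)^{*}(V^{-1}U)=UV^{-1}V^{-1}U=UV^{-2}U$. Thus $UV^{-2}U=\mathcal{I}$, which upon multiplying by $U^{-1}$ on both sides rearranges to $V^{-2}=U^{-2}$, that is, $U^2=V^2$.

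Finally, $U^2=V^2$ is a positive operator, and both $U$ and $V$ are positive square roots of it; by the uniqueness of the positive square root, $U=V$. Consequently $W=V^{-1}U=\mathcal{I}$, and therefore $\delta_k=We_k=e_k$ for all $k\in\mathbb{N}$, as claimed. The only points that require care are the observation that an operator sending one orthonormal basis onto another must be unitary, and the invocation of uniqueness of the positive square root; the rest of the argument is routine operator algebra, so I do not expect a genuine obstacle here.
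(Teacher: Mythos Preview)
Your proof is correct. Both your argument and the paper's start by invoking \emph{Proposition}~\ref{48} to write $f_k=Ue_k=V\delta_k$ with $U,V\in B^{+}_{b.b.}(\mathcal{H})$, and both finish by deducing $U=V$ from $U^2=V^2$ via the uniqueness of positive square roots. The difference lies in how the identity $U^2=V^2$ is obtained. You introduce the auxiliary operator $W=V^{-1}U$, observe it takes one orthonormal basis onto another and is therefore unitary, and then unwind $W^{*}W=\mathcal{I}$ to get $UV^{-2}U=\mathcal{I}$, hence $U^{-2}=V^{-2}$. The paper instead computes the frame operator $S_F$ of $\{f_k\}_{k=1}^{\infty}$ directly:
\[
S_Ff=\sum_{k=1}^{\infty}\langle f,Ue_k\rangle Ue_k=U\Big(\sum_{k=1}^{\infty}\langle U f,e_k\rangle e_k\Big)=U^2f,
\]
and likewise $S_Ff=V^2f$, giving $U^2=S_F=V^2$ at once. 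The paper's route is shorter and avoids the detour through unitarity (and the need to argue invertibility of $U,V$ up front), while your approach has the pleasant feature of making the operator $W$ explicit and showing it is \emph{a priori} unitary before concluding it is the identity; either way the argument is sound.
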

\begin{proof}
Since the sequence $\{f_{k}\}_{k=1}^{\infty}$ belongs to $[\{e_k\}]$ and $[\{\delta_k\}]$, by \emph{Proposition} \ref{48}, there exist operators $U,V\in B^{+}_{b.b.}(\mathcal{H}) $ such that $f_{k}=Ue_k$ and $f_k=V\delta_k$, for all $k\in\mathbb{N}$.\\
On the other hand, the sequence $F=\{f_{k}\}_{k=1}^{\infty}$ is a frame for $\mathcal{H}$ with biframe operator $S_F$, too.
For $f\in\mathcal{H}$,
$$S_F f=\sum_{k=1}^{\infty}\langle f,f_k\rangle f_k=\sum_{k=1}^{\infty}\langle f,Ue_k\rangle Ue_k=U^2 f,$$
and also $S_F f=V^2 f.$
These relations allow us to conclude that $U^2=V^2$, and so $U=V$. Now, for $k\in\mathbb{N}$, one obtains
$$f_k=Ue_k=U\delta_k.$$
Hence, $U(e_k-\delta_k)=0$, and so $e_k=\delta_k$.
\end{proof}
Here, a new class of sequences is introduced and we study elements of it.
\begin{defn}\label{70}
A sequence $\{f_{k}\}_{k=1}^{\infty}$ in $\mathcal{H}$ is called a \textbf{biframe-Riesz basis}, briefly \textbf{b-Riesz basis} for $\mathcal{H}$, if there is an orthonormal basis $\{e_{k}\}_{k=1}^{\infty}$ for $\mathcal{H}$ such that $(\{e_{k}\}_{k=1}^{\infty},\{f_{k}\}_{k=1}^{\infty})$ is a biframe for $\mathcal{H}$.
\end{defn}
Now, we can use \emph{Proposition} \ref{48} and \emph{Proposition} \ref{49}, and give some equivalent conditions for a sequence to be a b-Riesz basis.\\
\begin{theorem}\label{73}
Let $\{f_{k}\}_{k=1}^{\infty}$ be a sequence in $\mathcal{H}$. then the following statements are equivalent.
\begin{enumerate}
\item[(i)] $\{f_{k}\}_{k=1}^{\infty}$ is a b-Riesz basis.
\item[(ii)] $\{f_{k}\}_{k=1}^{\infty}\in [\{e_k\}]$ for an orthonormal basis $\{e_{k}\}_{k=1}^{\infty}$.
\item[(iii)] There exist an orthonormal basis $\{e_{k}\}_{k=1}^{\infty}$ and  an operator $U\in B^{+}_{b.b.}(\mathcal{H})$ such that $\{f_{k}\}_{k=1}^{\infty}=\{Ue_{k}\}_{k=1}^{\infty}$.
\item[(iv)] $\{f_{k}\}_{k=1}^{\infty}\in[\{e_k\}]$ for precisely an orthonormal basis $\{e_{k}\}_{k=1}^{\infty}.$
\end{enumerate}
\end{theorem}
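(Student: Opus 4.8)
The plan is to prove the chain of equivalences $(i)\Leftrightarrow(ii)\Leftrightarrow(iii)\Leftrightarrow(iv)$ by establishing $(i)\Leftrightarrow(ii)$ directly from definitions, $(ii)\Leftrightarrow(iii)$ by invoking \emph{Proposition} \ref{48}, and $(ii)\Leftrightarrow(iv)$ by invoking \emph{Proposition} \ref{49}. Most of the work has already been done in the preceding propositions, so the proof is essentially a matter of assembling them.

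First I would observe that $(i)\Leftrightarrow(ii)$ is immediate: by \emph{Definition} \ref{70}, $\{f_k\}_{k=1}^\infty$ is a b-Riesz basis precisely when there exists an orthonormal basis $\{e_k\}_{k=1}^\infty$ such that $(\{e_k\}_{k=1}^\infty,\{f_k\}_{k=1}^\infty)$ is a biframe, and by \emph{Definition} \ref{47} this is exactly the statement that $\{f_k\}_{k=1}^\infty\in[\{e_k\}]$ for some orthonormal basis $\{e_k\}_{k=1}^\infty$. Next, $(ii)\Leftrightarrow(iii)$ follows directly from \emph{Proposition} \ref{48}: that proposition states that for a fixed orthonormal basis $E=\{e_k\}_{k=1}^\infty$, membership $\{f_k\}_{k=1}^\infty\in[\{e_k\}]$ is equivalent to the existence of $U\in B^{+}_{b.b.}(\mathcal{H})$ with $f_k=Ue_k$ for all $k$; quantifying existentially over the orthonormal basis $\{e_k\}_{k=1}^\infty$ on both sides then yields the equivalence of $(ii)$ and $(iii)$.

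For $(ii)\Leftrightarrow(iv)$, the implication $(iv)\Rightarrow(ii)$ is trivial since ``precisely one'' implies ``at least one''. For $(ii)\Rightarrow(iv)$, suppose $\{f_k\}_{k=1}^\infty\in[\{e_k\}]$ for some orthonormal basis $\{e_k\}_{k=1}^\infty$; if it also belonged to $[\{\delta_k\}]$ for another orthonormal basis $\{\delta_k\}_{k=1}^\infty$, then \emph{Proposition} \ref{49} forces $e_k=\delta_k$ for all $k\in\mathbb{N}$, so the orthonormal basis is unique. This gives $(iv)$.

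The proof presents no real obstacle, as the substantive content is entirely contained in \emph{Proposition} \ref{48} (representation via a self-adjoint, positive, bounded-below operator) and \emph{Proposition} \ref{49} (uniqueness of the underlying orthonormal basis). The only point requiring a little care is the bookkeeping of existential quantifiers over orthonormal bases when passing between the ``fixed basis'' formulations of the propositions and the ``some basis'' formulations in the statements $(i)$--$(iv)$; once that is handled cleanly, the four conditions collapse into one another with no further computation.
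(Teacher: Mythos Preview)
Your proposal is correct and mirrors exactly what the paper does: the paper states the theorem without a written proof, prefacing it only with the remark that one can use \emph{Proposition}~\ref{48} and \emph{Proposition}~\ref{49} to obtain the equivalences. Your assembly of $(i)\Leftrightarrow(ii)$ from the definitions, $(ii)\Leftrightarrow(iii)$ from \emph{Proposition}~\ref{48}, and $(ii)\Leftrightarrow(iv)$ from \emph{Proposition}~\ref{49} is precisely the intended argument.
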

 If we denote the set of all orthonormal bases of $\mathcal{H}$ by $\mathcal{O}$, the set of all Riesz bases by $\mathcal{R}$, and the set of all b-Riesz bases by $\mathcal{E}$, then by \emph{Proposition} \ref{48}, these sets are ordered as $\mathcal{O}\subset\mathcal{E}\subset \mathcal{R}$. The following example illustrates that they are proper subsets.
\begin{example}\label{71}
\begin{enumerate}
\item[(i)] We consider the Riesz basis $\{f_k\}_{k=1}^{2}=\{(-1,2),(1,0)\}$ for ${\mathbb{R}}^2$.
 every orthonormal basis for ${\mathbb{R}}^2$ is in the following form. For $a\in[0,1]$,
 $$\{e^{a}_k\}_{k=1}^{2}=\Big\{(a,\sqrt{1-a^2}),(-\sqrt{1-a^2},a)\Big\}.$$
 If there exists $a\in[0,1]$ such that $\{f_k\}_{k=1}^{2}\in[\{e^{a}_k\}]$, then there are positive numbers $A$ and $B$ such that for every $(x,y)\in{\mathbb{R}}^2$
 \begin{align*}
A(x^2+y^2)&\leq\big\langle (x,y),(a,\sqrt{1-a^2})\big\rangle\big\langle (-1,2),(x,y)\big\rangle\\
&+\big\langle (x,y),(-\sqrt{1-a^2},a)\big\rangle\big\langle (1,0),(x,y)\big\rangle\\
 &\leq B(x^2+y^2),
\end{align*}
and then
 $$A(x^2+y^2)\leq -(a+\sqrt{1-a^2})x^2+2(\sqrt{1-a^2})y^2+(3a-\sqrt{1-a^2})xy\leq B(x^2+y^2).$$
 Now, set $y=1$. We get a quadratic equation as follows. 
 $$ -(a+\sqrt{1-a^2})x^2+(3a-\sqrt{1-a^2})x+2(\sqrt{1-a^2})=0,$$
 This equation has positive $\Delta=2a\sqrt{1-a^2}+9>0$. Hence, there are none zero points $(x,y)$ in ${\mathbb{R}}^2$ such that 
 $$ \big\langle(x,y),(a,\sqrt{1-a^2})\big\rangle\big\langle (-1,2),(x,y)\big\rangle +\\
 \big\langle (x,y),(-\sqrt{1-a^2},a)\big\rangle=0.$$
So there is no orthonormal basis for ${\mathbb{R}}^2$ such that form a biframe with $\{f_k\}_{k=1}^{2}$, that is $\{f_k\}_{k=1}^{2}$ is not belong to $\mathcal{E}$, hence $\mathcal{R}\neq\mathcal{E}$.
 \item[(ii)] We consider the orthonormal basis $\{e_k\}_{k=1}^2=\{(1,0),(0,1)\}$ for ${\mathbb{R}}^2$ and the sequence\\
  $\{f_k\}_{k=1}^2=\{(3,-1),(-1,2)\}$. The following calculations show that $\{f_k\}_{k=1}^2\in [\{e_k\}]$.\\
   For $(x,y)\in{\mathbb{R}}^2$,
  $$\big\langle(x,y),(1,0)\big\rangle\big\langle (3,-1),(x,y)\big\rangle +
 \big\langle (x,y),(0,1)\big\rangle\big\langle(-1,2),(x,y)\big\rangle=3x^2-2xy+2y^2.$$
 Now we can see that
  $$x^2+y^2\leq 3x^2-2xy+2y^2 \leq 4(x^2+y^2).$$
 So $(\{e_k\}_{k=1}^2,\{f_k\}_{k=1}^2)$ is a biframe for ${\mathbb{R}}^2$ with bounds 1 and 4. But it is clear that $\{f_k\}_{k=1}^{2}$ is not an orthonormal basis for ${\mathbb{R}}^2$, so $\mathcal{E}\neq\mathcal{O}$.
\end{enumerate}
\end{example}
The \emph{Proposition} \ref{49}, shows that the subsets $[\{e_k\}]$ of $\mathcal{E}$ are distinct whenever $\{e_k\}_{k=1}^\infty$ is an orthonormal basis for $\mathcal{H}$. Also, it is clear that $\mathcal{E}=\bigcup\big[\{e_{k}\}\big]$, hence the collection $\Gamma=\Big\{[\{e_k\}]\mid\{e_k\}$ is an orthonormal basis for $\mathcal{H}\Big\}$ is a partition of $\mathcal{E}$, and therefore induces an equivalence relation $\sim$ on the set $\mathcal{E}$, i.e., this result leads us to equivalence classes on $\mathcal{E}$. This is why we represented these subsets by the notation $[\{e_k\}]$. In fact, we have the following equivalence relation $\sim$ between sequences $\{f_{k}\}_{k=1}^{\infty}$ and $\{g_{k}\}_{k=1}^{\infty}$ in $\mathcal{E}$.\\
$\{f_{k}\}_{k=1}^{\infty}\sim\{g_{k}\}_{k=1}^{\infty}\Leftrightarrow$ there exists an orthonormal basis $\{e_{k}\}_{k=1}^{\infty}$ s.t. $\{f_{k}\}_{k=1}^{\infty},\{g_{k}\}_{k=1}^{\infty}\in[\{e_k\}]$.

In order to show another difference between biframes and pair frames, in what follows we study a similar problem, and we observe that the results obtained for biframes do not apply to pair frames.
\begin{defn}\label{52}
Consider the set $\mathcal{E}'$ of sequences in $\mathcal{H}$ as follows.
\begin{center}
$\mathcal{E}^{'}=\Big\{\{f_k\}_{k=1}^{\infty}\mid (\{e_k\}_{k=1}^{\infty},\{f_k\}_{k=1}^{\infty})$ is a pair frame, for some orthonormal basis $\{e_k\}_{k=1}^{\infty}\Big\}.$\\
\end{center}
For an orthonormal basis $\{e_k\}_{k=1}^{\infty}$ the subset  ${[\{e_k\}]}^{'}$ of $\mathcal{E}^{'}$ is considered as follows.
 \begin{center}
 ${[\{e_k\}]}^{'}=\Big\{\{f_k\}_{k=1}^{\infty}\mid (\{e_k\}_{k=1}^{\infty},\{f_k\}_{k=1}^{\infty})$ is a pair frame for $\mathcal{H}\Big\}$.
 \end{center}
 \end{defn}
 \begin{prop}\label{53}
 The set $\mathcal{E}^{'}$ is the set of all Riesz bases.
 \end{prop}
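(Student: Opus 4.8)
The plan is to show the two inclusions $\mathcal{E}' \subseteq \mathcal{R}$ and $\mathcal{R} \subseteq \mathcal{E}'$ separately, where $\mathcal{R}$ denotes the set of all Riesz bases for $\mathcal{H}$. For the inclusion $\mathcal{E}' \subseteq \mathcal{R}$, I would take $\{f_k\}_{k=1}^{\infty} \in \mathcal{E}'$, so that there is an orthonormal basis $\{e_k\}_{k=1}^{\infty}$ for which $(\{e_k\}_{k=1}^{\infty}, \{f_k\}_{k=1}^{\infty})$ is a pair frame. By the definition of pair frame, the operator $Sf = \sum_{k=1}^{\infty}\langle f, e_k\rangle f_k$ is well-defined and invertible on $\mathcal{H}$. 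For $k \in \mathbb{N}$ we compute $S e_k = \sum_{i=1}^{\infty}\langle e_k, e_i\rangle f_i = f_k$, so $f_k = S e_k$ with $S \in GL(\mathcal{H})$; by the definition of Riesz basis recalled in Section~2, $\{f_k\}_{k=1}^{\infty}$ is a Riesz basis.

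For the reverse inclusion $\mathcal{R} \subseteq \mathcal{E}'$, I would take a Riesz basis $\{f_k\}_{k=1}^{\infty}$, so by definition there is an orthonormal basis $\{e_k\}_{k=1}^{\infty}$ and an operator $U \in GL(\mathcal{H})$ with $f_k = U e_k$ for all $k$. Then for $f \in \mathcal{H}$ the pair frame operator associated with $(\{e_k\}_{k=1}^{\infty}, \{f_k\}_{k=1}^{\infty})$ is $Sf = \sum_{k=1}^{\infty}\langle f, e_k\rangle f_k = \sum_{k=1}^{\infty}\langle f, e_k\rangle U e_k = U\sum_{k=1}^{\infty}\langle f, e_k\rangle e_k = Uf$, so $S = U$, which is well-defined and invertible. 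Hence $(\{e_k\}_{k=1}^{\infty}, \{f_k\}_{k=1}^{\infty})$ is a pair frame and $\{f_k\}_{k=1}^{\infty} \in \mathcal{E}'$.

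Combining the two inclusions gives $\mathcal{E}' = \mathcal{R}$. I do not expect any serious obstacle here: the argument is essentially the observation that the pair frame operator of $(\{e_k\}, \{f_k\})$ evaluated on the orthonormal basis simply reproduces the sequence $\{f_k\}$, so invertibility of that operator is exactly the condition defining a Riesz basis. The only point worth stating carefully is that the same orthonormal basis $\{e_k\}$ serves in both directions, so no matching or uniqueness issue (of the kind appearing in \emph{Proposition} \ref{49}) arises. The contrast with $\mathcal{E}$, namely that $\mathcal{E} \subsetneq \mathcal{R} = \mathcal{E}'$, then follows from \emph{Example} \ref{71}(i), underscoring the structural difference between biframes and pair frames promised in the narrative.
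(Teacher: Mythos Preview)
Your proof is correct and follows essentially the same argument as the paper: both directions hinge on computing the pair frame operator $S$ on the orthonormal basis to get $Se_k=f_k$, and conversely identifying $S$ with the invertible operator $U$ (the paper's $V$) coming from the Riesz basis definition. One caveat on your closing remark: the claim $\mathcal{E}\subsetneq\mathcal{R}$ via \emph{Example}~\ref{71}(i) is not part of the proof of this proposition, and the paper's own corrigendum retracts it (the example is flawed and in fact $\mathcal{E}=\mathcal{R}$), so you should drop that sentence.
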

 \begin{proof}
 Let $\{f_k\}_{k=1}^{\infty}\in \mathcal{E}^{'}$.  Then, there exists an orthonormal basis $\{e_k\}_{k=1}^{\infty}$ such that  $ (\{e_k\}_{k=1}^{\infty},\{f_k\}_{k=1}^{\infty})$ is a pair frame for $\mathcal{H}$, that is, the pair frame operator $S$ is a well-defined and invertible operator. For $j\in \mathbb{N}$,
 $$Se_j=\sum_{k=1}^{\infty}\langle e_j,e_k\rangle f_k=f_j.$$
 Therefore, $f_k=Se_k$, for all $k\in\mathbb{N}$. So, $\{f_k\}_{k=1}^{\infty}$ is a Riesz basis for $\mathcal{H}$.\\
 Conversely, suppose that the sequence $\{f_k\}_{k=1}^{\infty}$ is a Riesz basis for $\mathcal{H}$. Then, there exists an invertible operator $V\in B(\mathcal{H})$ such that $f_k=Ve_k$, for all $k\in\mathbb{N}$. For $f\in\mathcal{H}$,
 $$Vf=\sum_{k=1}^{\infty}\langle f,e_k\rangle Ve_k=\sum_{k=1}^{\infty}\langle f,e_k\rangle f_k.$$
 This implies that $(\{e_k\}_{k=1}^{\infty},\{f_k\}_{k=1}^{\infty})$ is a pair frame for $\mathcal{H}$ with the pair frame operator $V$.
  \end{proof}
  The next example shows that unlike the subsets $[\{e_k\}]$ of $\mathcal{E}$, the subsets ${[\{e_k\}]}^{'}$ of $\mathcal{E}^{'}$ are not distinct.  Hence, the collection of them cannot be a partition for the set $\mathcal{E}^{'}=\mathcal{R}$.
  \begin{example}\label{54}
 Consider the following orthonormal basiss for ${\mathbb{R}}^2$.
  \begin{center}
   $\{e_k\}_{k=1}^{2}=\{(1,0),(0,1)\}$ and  $\{\delta_k\}_{k=1}^{2}=\{(0,1),(1,0)\}.$ 
   \end{center}  
   It is easy to check that the sequence $\{f_k\}_{k=1}^{2}=\{(0,1),(1,1)\}$ belongs to $[\{e_k\}]^{'}$ and $[\{\delta_k\}]^{'}$, because for every $k\in\mathbb{N}$,
  $$ f_k=\begin{bmatrix}
  0 & 1\\
  1 & 1
  \end{bmatrix}e_k
  \ ,\ f_k=\begin{bmatrix}
  1 & 0\\
  1 & 1
  \end{bmatrix}\delta_k.$$
  \end{example}
In what follows, we examine the biframe property according to whether the constituent sequences are b-Riesz bases or are not. In this regard, the following questions arise.
 \begin{enumerate}
 \item[$(Q_4)$]  Is it necessarily true that any two b-Riesz bases form a biframe?
 \item[$(Q_5)$] Is there a biframe for which none of the constituent sequences are b- Riesz bases?
 \item[$(Q_6)$] Is there a biframe for which just one of constituent sequences is a b- Riesz basis?
\end{enumerate}
The next example shows that the answer to question $(Q_4)$ is negative.
 In \emph{Proposition} \ref{57}, we propose a necessary condition for such sequences to form a biframe. Also, we have a positive answer to question $(Q_5)$ and $(Q_6)$ that are illustrated  in \emph{Example} \ref{58} and \emph{Example} \ref{72}.
\begin{example}
We consider the orthonormal $\{e_k\}_{k=1}^2=\{(1,0),(0,1)\}$ for ${\mathbb{R}}^2$ and two sequences $\{f_k\}_{k=1}^2$ and $\{g_k\}_{k=1}^2$ as the following.
\begin{center}
$\{f_k\}_{k=1}^2=\{(3,1),(1,1)\}$,
$\{g_k\}_{k=1}^2=\{(2,-1),(-1,1)\}.$
\end{center}
Simple calculations show that $(\{e_k\}_{k=1}^2,\{f_k\}_{k=1}^2)$ is a biframe for ${\mathbb{R}}^2$ with bounds $\frac{1}{2}$ and $4$, and $(\{e_k\}_{k=1}^2,\{g_k\}_{k=1}^2)$ is a biframe for ${\mathbb{R}}^2$ with bounds $\frac{1}{4}$ and $3$. For $(x,y)\in{\mathbb{R}}^2$,
$$\big\langle (x,y),(3,1)\big\rangle\big\langle (2,-1),(x,y)\big\rangle +
 \big\langle (x,y),(1,1)\big\rangle\big\langle (-1,1),(x,y)\big\rangle =5x^2-xy.$$
Now set $(x,y)=(1,5)$, hence $5x^2-xy=0$, and this implies that above summation do not satisfy in upper and lower biframe bounds condition and so $(\{f_k\}_{k=1}^2,\{g_k\}_{k=1}^2)$ is not a biframe for ${\mathbb{R}}^2$.
\end{example}
 \begin{prop}\label{57}
 Let $\{e_{k}\}_{k=1}^{\infty}$ be an orthonormal basis for $\mathcal{H}$, and
suppose that $\{f_{k}\}_{k=1}^{\infty}=\{Ue_{k}\}_{k=1}^{\infty}$ and $\{g_{k}\}_{k=1}^{\infty}=\{Ve_{k}\}_{k=1}^{\infty}$ are in $[\{e_k\}]$. If the operator $VU$ is positive, then $(\{f_{k}\}_{k=1}^{\infty}, \{g_{k}\}_{k=1}^{\infty})$ is a biframe for $\mathcal{H}$. 
\end{prop}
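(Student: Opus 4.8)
The plan is to compute the biframe operator $S_{F,G}$ associated with the pair $(\{f_k\},\{g_k\})$ directly in terms of $U$ and $V$, and then invoke \emph{Theorem} \ref{32}. Since $\{e_k\}_{k=1}^\infty$ is an orthonormal basis, for any $f\in\mathcal{H}$ we have
\begin{align*}
S_{F,G}f=\sum_{k=1}^\infty\langle f,f_k\rangle g_k
=\sum_{k=1}^\infty\langle f,Ue_k\rangle Ve_k
=V\sum_{k=1}^\infty\langle U^{*}f,e_k\rangle e_k
=VU^{*}f.
\end{align*}
So $S_{F,G}=VU^{*}$. Now I would like to say that $S_{F,G}$ is positive and bounded below, and conclude by \emph{Theorem} \ref{32} that $(\{f_k\},\{g_k\})$ is a biframe.

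The hypothesis, however, is stated about $VU$, not $VU^{*}$, so the first key step is to reconcile these. Because $\{f_k\}=\{Ue_k\}\in[\{e_k\}]$, \emph{Proposition} \ref{48} gives $U\in B^{+}_{b.b.}(\mathcal{H})$; likewise $V\in B^{+}_{b.b.}(\mathcal{H})$. In particular both $U$ and $V$ are self-adjoint, so $VU^{*}=VU$, and the hypothesis that $VU$ is positive is exactly the hypothesis that $S_{F,G}=VU$ is positive. (If one prefers to keep the argument symmetric, note that $S_{F,G}^{*}=S_{G,F}=UV^{*}=UV=(VU)^{*}=VU$ as well, consistent with self-adjointness of a positive operator.)

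The second key step is boundedness below of $S_{F,G}=VU$. Here I would use that $U,V\in B^{+}_{b.b.}(\mathcal{H})$ are invertible: an operator in $B^{+}_{b.b.}(\mathcal{H})$ is in $GL^{+}(\mathcal{H})$ (this is already used implicitly throughout Section 4, e.g. in the proof of \emph{Theorem} \ref{32}), so $U$ and $V$ are invertible, hence $VU$ is invertible, and an invertible positive operator is automatically bounded below with lower bound $1/\|(VU)^{-1}\|$. Thus $S_{F,G}=VU$ is positive and bounded below, and by \emph{Theorem} \ref{32} the pair $(\{f_k\}_{k=1}^\infty,\{g_k\}_{k=1}^\infty)$ is a biframe for $\mathcal{H}$.

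The main obstacle is essentially bookkeeping rather than mathematics: one must be careful that the hypothesis is phrased in terms of $VU$ while the natural object is $S_{F,G}=VU^{*}$, and the resolution relies crucially on \emph{Proposition} \ref{48} forcing $U$ and $V$ to be self-adjoint (so that, in a complex Hilbert space, the positivity assumption is really about the operator that governs the biframe inequality). Everything else — the termwise computation of $S_{F,G}$ using the orthonormal expansion, and the passage from ``positive and invertible'' to ``positive and bounded below'' — is routine and already established in the earlier sections.
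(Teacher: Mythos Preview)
Your proposal is correct and follows essentially the same route as the paper: both compute the biframe operator as $VU$ (the paper does this directly, implicitly using $U^{*}=U$, while you first get $VU^{*}$ and then invoke self-adjointness from \emph{Proposition}~\ref{48}), then use invertibility of $U,V\in B^{+}_{b.b.}(\mathcal{H})$ to conclude that $VU$ is positive and bounded below, and finish with \emph{Theorem}~\ref{32}. Your write-up is, if anything, slightly more explicit about the $VU$ versus $VU^{*}$ bookkeeping than the paper's own proof.
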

\begin{proof}
Since $\{f_{k}\}_{k=1}^{\infty}$ and $\{g_{k}\}_{k=1}^{\infty}$ are in $[\{e_k\}]$, by \emph{Theorem} \ref{73},  $U$ and $V$ are in ${B}^{+}_{b.b.}(\mathcal{H})$. For $f\in\mathcal{H}$,
$$VUf=VU(\sum_{k=1}^{\infty}\langle f,e_{k}\rangle e_{k})=\sum_{k=1}^{\infty}\langle f,Ue_{k}\rangle Ve_{k}=\sum_{k=1}^{\infty}\langle f,f_{k}\rangle g_{k}=Sf.$$
If the operator $VU$ is positive, then $S\in{B}^{+}_{b.b.}(\mathcal{H})$ and so  $(\{f_{k}\}_{k=1}^{\infty}, \{g_{k}\}_{k=1}^{\infty})$ is a biframe for $\mathcal{H}$. 
\end{proof}
\begin{example}\label{58}
Let $\{e_k\}_{k=1}^{\infty}$ be an orthonormal basis for $\mathcal{H}$.
We consider the sequences $\{f_{k}\}_{k=1}^{\infty}$ and $\{g_{k}\}_{k=1}^{\infty}$ defined by
\begin{center}
 $\{f_{k}\}_{k=1}^{\infty}=\{ke_{k}\}_{k=1}^{\infty}$,\\
  $\{g_{k}\}_{k=1}^{\infty}=\{\frac{1}{k}e_{k}\}_{k=1}^{\infty}$.
\end{center}
The pair $(\{f_{k}\}_{k=1}^{\infty}, \{g_{k}\}_{k=1}^{\infty})$ is a Parseval biframe. Since
for $f\in\mathcal{H}$,
\begin{align*}
\sum_{k=1}^\infty\langle f, f_k\rangle\langle g_k,f\rangle &=\sum_{k=1}^\infty\langle f, ke_k\rangle\langle \frac{1}{k}e_k,f\rangle\\
&=\sum_{k=1}^\infty {\vert\langle f, e_k\rangle\vert}^2={\Vert f\Vert}^2.
\end{align*}
Of course, $\{f_{k}\}_{k=1}^{\infty}$ and $\{g_{k}\}_{k=1}^{\infty}$ are not in $[\{e_k\}]$, because the left summation in the equations 
$$\sum_{k=1}^\infty\langle f, e_k\rangle\langle f_k,f\rangle=\sum_{k=1}^\infty\langle f, e_k\rangle\langle ke_k,f\rangle=\sum_{k=1}^\infty k{\vert\langle f, e_k\rangle\vert}^2,$$
and
$$\sum_{k=1}^\infty\langle f, e_k\rangle\langle g_k,f\rangle=\sum_{k=1}^\infty\langle f, e_k\rangle\langle \frac{1}{k}e_k,f\rangle=\sum_{k=1}^\infty \frac{1}{k}{\vert\langle f, e_k\rangle\vert}^2$$
have no upper bound and lower bound, respectively.

\end{example} 
\begin{example}\label{72}
We consider the orthonormal basis $\{e_k\}_{k=1}^\infty=\{(1,0),(0,1)\}$ for ${\mathbb{R}}^2$, and the sequences $\{f_k\}_{k=1}^\infty$ and $\{g_k\}_{k=1}^\infty$ as follows.
$$\{f_k\}_{k=1}^2=\{(3,-1),(-1,2)\},$$
$$\{g_k\}_{k=1}^2=\{(0,\frac{1}{5}),(-1,\frac{13}{5})\}.$$
We have seen in Example \ref{71}, part (ii) that $\{f_k\}_{k=1}^2\in [\{e_k\}]$. The following calculations show that $\{g_k\}_{k=1}^2\notin [\{e_k\}]$. For $(x,y)\in{\mathbb{R}}^2$,
 $$\big\langle(x,y),(1,0)\big\rangle\big\langle (0,\frac{1}{5}),(x,y)\big\rangle +
 \big\langle (x,y),(0,1)\big\rangle\big\langle(-1,\frac{13}{5}),(x,y)\big\rangle=-\frac{4}{5}xy+\frac{13}{5}y^2.$$
 Now, we set $(x,y)=(1,\frac{13}{4})$ which gives the result $-\frac{4}{5}xy+\frac{13}{5}y^2=0$, so $(\{e_k\}_{k=1}^2,\{f_k\}_{k=1}^2)$ is not a biframe for ${\mathbb{R}^2}$. Also, the following calculations show that $(\{f_k\}_{k=1}^2,\{g_k\}_{k=1}^2)$ is a biframe for ${\mathbb{R}^2}$ with bounds $\frac{1}{6}$ and 7.
 $$\big\langle(x,y),(3,-1)\big\rangle\big\langle (0,\frac{1}{5}),(x,y)\big\rangle +
 \big\langle (x,y),(-1,2)\big\rangle\big\langle(-1,\frac{13}{5}),(x,y)\big\rangle=x^2+5y^2-4xy,$$
and so
  $$\frac{1}{6}(x^2+y^2)\leq x^2+5y^2-4xy\leq 7(x^2+y^2).$$
\end{example}
In the next theorem, we characterize those biframes for which one of the constituent sequences is a b-Riesz basis for $\mathcal{H}$.
\begin{theorem}\label{59}
Let $\{f_{k}\}_{k=1}^{\infty}$ be a b-Riesz basis for $\mathcal{H}$ and $\{g_{k}\}_{k=1}^{\infty}$ is a sequence in $\mathcal{H}$. Then, the following statements are equivalent.
\begin{enumerate}
\item[(i)] $(\{f_{k}\}_{k=1}^{\infty},\{g_{k}\}_{k=1}^{\infty})$ is a biframe for $\mathcal{H}$.
\item[(ii)]There exist $U,Q\in{B}^{+}_{b.b.}(\mathcal{H})$ such that $f_{k}=Ue_{k}$ and $g_{k}=Q{U}^{-1}e_{k}$, for all $k\in\mathbb{N}$, where $\{e_{k}\}_{k=1}^{\infty}$ is an orthonormal basis for $\mathcal{H}$.
\end{enumerate}
\end{theorem}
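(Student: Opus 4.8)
The plan is to prove the two directions separately, leaning heavily on \emph{Theorem} \ref{73} (the characterization of b-Riesz bases) and \emph{Theorem} \ref{32} (a pair is a biframe iff its biframe operator is positive and bounded below).

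For the direction $(ii)\Rightarrow(i)$, I would start from the assumed representation $f_k=Ue_k$ and $g_k=QU^{-1}e_k$ with $U,Q\in B^{+}_{b.b.}(\mathcal{H})$. For any $f\in\mathcal{H}$, I compute the biframe operator directly:
\begin{align*}
\sum_{k=1}^{\infty}\langle f,f_{k}\rangle g_{k}
&=\sum_{k=1}^{\infty}\langle f,Ue_{k}\rangle QU^{-1}e_{k}
=QU^{-1}\sum_{k=1}^{\infty}\langle U f,e_{k}\rangle e_{k}\\
&=QU^{-1}Uf=Qf.
\end{align*}
Since $Q\in B^{+}_{b.b.}(\mathcal{H})$, the biframe operator $S_{F,G}=Q$ is positive and bounded below, so \emph{Theorem} \ref{32} gives that $(\{f_{k}\}_{k=1}^{\infty},\{g_{k}\}_{k=1}^{\infty})$ is a biframe. (Here I use $U^{-1}$ is bounded because $U\in GL^{+}(\mathcal{H})\subseteq GL(\mathcal{H})$; this is built into $B^{+}_{b.b.}(\mathcal{H})$ by \emph{Theorem} \ref{32}/\ref{31}, since a positive bounded-below operator on a Hilbert space is invertible.)

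For the direction $(i)\Rightarrow(ii)$, I would argue as follows. Since $\{f_{k}\}_{k=1}^{\infty}$ is a b-Riesz basis, \emph{Theorem} \ref{73} (equivalence of (i), (iii)) provides an orthonormal basis $\{e_{k}\}_{k=1}^{\infty}$ and an operator $U\in B^{+}_{b.b.}(\mathcal{H})$ with $f_{k}=Ue_{k}$ for all $k$. Now suppose $(\{f_{k}\}_{k=1}^{\infty},\{g_{k}\}_{k=1}^{\infty})$ is a biframe with biframe operator $S_{F,G}$; by \emph{Theorem} \ref{32}, $S_{F,G}\in B^{+}_{b.b.}(\mathcal{H})$. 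Define $Q:=S_{F,G}$. The key computation is, for $k\in\mathbb{N}$,
$$
g_{k}=S_{F,G}\big(U^{-1}\big)^{*}\cdots
$$
— more precisely, I want to recover $g_k$ from $S_{F,G}$. Since $\{e_k\}$ is orthonormal and $U$ is self-adjoint, I have $U^{-1}e_k$ well-defined and
$$
S_{F,G}U^{-1}e_{k}=\sum_{i=1}^{\infty}\langle U^{-1}e_{k},f_{i}\rangle g_{i}
=\sum_{i=1}^{\infty}\langle U^{-1}e_{k},Ue_{i}\rangle g_{i}
=\sum_{i=1}^{\infty}\langle e_{k},e_{i}\rangle g_{i}=g_{k},
$$
using $\langle U^{-1}e_k,Ue_i\rangle=\langle UU^{-1}e_k,e_i\rangle=\langle e_k,e_i\rangle$ by self-adjointness of $U$. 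Hence $g_{k}=QU^{-1}e_{k}$ with $Q=S_{F,G}\in B^{+}_{b.b.}(\mathcal{H})$, which is exactly (ii).

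The main obstacle, and the point requiring care, is justifying the interchange of $QU^{-1}$ (resp.\ $S_{F,G}$) with the infinite sum in both directions. In the $(ii)\Rightarrow(i)$ direction the series $\sum\langle Uf,e_k\rangle e_k=Uf$ converges unconditionally since $\{e_k\}$ is an orthonormal basis, and $QU^{-1}$ is bounded, so continuity justifies pulling it out; in the $(i)\Rightarrow(ii)$ direction the series $\sum\langle f,f_k\rangle g_k$ converges because $S_{F,G}$ is well-defined (\emph{Theorem} \ref{31}(i)), and only finitely many terms survive after taking inner products with the orthonormal $e_i$, so no delicate rearrangement is needed there. One should also remark explicitly that $U\in B^{+}_{b.b.}(\mathcal{H})$ forces $U$ invertible (positive and bounded below $\Rightarrow$ injective with closed dense range), so $U^{-1}$ exists and is bounded, and likewise self-adjoint; this is what makes the identity $\langle U^{-1}e_k,Ue_i\rangle=\langle e_k,e_i\rangle$ legitimate. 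Beyond that, the argument is a straightforward bookkeeping of the two characterization theorems already established.
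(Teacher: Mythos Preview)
Your proof is correct and follows essentially the same approach as the paper: in both directions you compute the biframe operator explicitly (showing $S_{F,G}=Q$ in $(ii)\Rightarrow(i)$, and evaluating $S_{F,G}U^{-1}e_k=g_k$ in $(i)\Rightarrow(ii)$) and then invoke \emph{Theorem}~\ref{32} and \emph{Theorem}~\ref{73}. Your extra remarks on the invertibility and self-adjointness of $U$ and on justifying the interchange of sums with bounded operators are welcome clarifications but do not alter the argument.
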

\begin{proof}
For the proof of $(i)\Rightarrow(ii)$, let $(\{f_{k}\}_{k=1}^{\infty},\{g_{k}\}_{k=1}^{\infty})$ be a biframe with biframe operator $S_{F,G}\in{B}^{+}_{b.b.}(\mathcal{H})$. $\{f_{k}\}_{k=1}$ is a b-Riesz basis and by \emph{Theorem} \ref{73}, there exist an orthonormal basis $\{e_{k}\}_{k=1}^{\infty}$ for $\mathcal{H}$, and an operator $U\in{B}^{+}_{b.b.}(\mathcal{H})$ such that $f_{k}=Ue_{k}$, for all $k\in\mathcal{H}$. For $f\in\mathcal{H}$, we obtain
$$S_{F,G}f=\sum_{i=1}^{\infty}\langle f,f_{i}\rangle g_{i}=\sum_{i=1}^{\infty}\langle f,Ue_{i}\rangle g_{i}=\sum_{i=1}^{\infty}\langle Uf,e_{i}\rangle g_{i},$$
and so for $k\in\mathbb{N}$,
\begin{equation*}\label{60}
S_{F,G}{U}^{-1}e_{k}=\sum_{i=1}^{\infty}\langle U{U}^{-1}e_{k},e_{i}\rangle g_{i}=\sum_{i=1}^{\infty}\langle e_{k},e_{i}\rangle g_{i}=g_{k}.
\end{equation*}
This implies the desired result.
Now, suppose that (ii) holds. For $f\in\mathcal{H}$,
$$\sum_{k=1}^{\infty}\langle f,f_{k}\rangle g_{k}=\sum_{k=1}^{\infty}\langle f,Ue_{k}\rangle Q{U}^{-1}e_{k}=
Q{U}^{-1}\sum_{k=1}^{\infty}\langle Uf,e_{k}\rangle e_{k}=Q{U}^{-1}Uf=Qf.$$
This equality implies that $S_{F,G}=Q\in{B}^{+}_{b.b.}(\mathcal{H})$, and $(\{f_{k}\}_{k=1}^{\infty},\{g_{k}\}_{k=1}^{\infty})$ is a biframe by \emph{Theorem} \ref{32}.
\end{proof}
\begin{cor}\label{61}
Let $\{f_{k}\}_{k=1}^{\infty}$ be a b-Riesz basis for $\mathcal{H}$ and $\{g_{k}\}_{k=1}^{\infty}$ be a sequence in $\mathcal{H}$. Then, the following statements are equivalent.
\begin{enumerate}
\item[(i)] $(\{f_{k}\}_{k=1}^{\infty},\{g_{k}\}_{k=1}^{\infty})$ is a Parseval biframe for $\mathcal{H}$.
\item[(ii)]There exists $U\in{B}^{+}_{b.b.}(\mathcal{H})$ such that $f_{k}=Ue_{k}$ and $g_{k}={U}^{-1}e_{k}$, for all $k\in\mathbb{N}$, where $\{e_{k}\}_{k=1}^{\infty}$ is an orthonormal basis for $\mathcal{H}$.
\end{enumerate}
\end{cor}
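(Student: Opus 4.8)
The plan is to deduce this directly from \emph{Theorem} \ref{59}, using the observation that a Parseval biframe is precisely a biframe whose biframe operator is the identity $\mathcal{I}$.

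First I would prove $(i)\Rightarrow(ii)$. Assuming $(\{f_{k}\}_{k=1}^{\infty},\{g_{k}\}_{k=1}^{\infty})$ is a Parseval biframe, it is in particular a biframe, so \emph{Theorem} \ref{59} supplies an orthonormal basis $\{e_{k}\}_{k=1}^{\infty}$ and operators $U,Q\in{B}^{+}_{b.b.}(\mathcal{H})$ with $f_{k}=Ue_{k}$ and $g_{k}=Q{U}^{-1}e_{k}$; moreover, the computation in the proof of that theorem ($\sum_{k}\langle f,f_{k}\rangle g_{k}=QU^{-1}Uf=Qf$) identifies $Q$ with the biframe operator $S_{F,G}$. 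It then remains to check $Q=\mathcal{I}$. Since $\mathcal{H}$ is complex (\emph{Remark} \ref{67}) and $S_{F,G}$ is positive hence self-adjoint (\emph{Theorem} \ref{31}), the Parseval hypothesis $A=B=1$ gives $\langle S_{F,G}f,f\rangle=\sum_{k=1}^{\infty}\langle f,f_{k}\rangle\langle g_{k},f\rangle={\Vert f\Vert}^{2}=\langle \mathcal{I}f,f\rangle$ for every $f\in\mathcal{H}$, which forces $S_{F,G}=\mathcal{I}$ in a complex Hilbert space. Hence $Q=\mathcal{I}$ and $g_{k}=Q{U}^{-1}e_{k}={U}^{-1}e_{k}$, which is exactly $(ii)$.

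For $(ii)\Rightarrow(i)$, given $U\in{B}^{+}_{b.b.}(\mathcal{H})$, an orthonormal basis $\{e_{k}\}_{k=1}^{\infty}$, and $f_{k}=Ue_{k}$, $g_{k}={U}^{-1}e_{k}$, I would apply \emph{Theorem} \ref{59} with the choice $Q=\mathcal{I}\in{B}^{+}_{b.b.}(\mathcal{H})$ (writing $g_{k}=\mathcal{I}{U}^{-1}e_{k}$): it follows that $(\{f_{k}\}_{k=1}^{\infty},\{g_{k}\}_{k=1}^{\infty})$ is a biframe with biframe operator $\mathcal{I}$, and therefore $\sum_{k=1}^{\infty}\langle f,f_{k}\rangle\langle g_{k},f\rangle=\langle \mathcal{I}f,f\rangle={\Vert f\Vert}^{2}$ for all $f\in\mathcal{H}$, i.e. the pair is a Parseval biframe.

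There is essentially no hard step: the argument is a straightforward specialization $Q=\mathcal{I}$ of \emph{Theorem} \ref{59} and its proof. The only point requiring a little care is the identification of a Parseval biframe with one whose biframe operator equals $\mathcal{I}$, which relies on the self-adjointness of the (complex) biframe operator recorded in \emph{Theorem} \ref{31} together with \emph{Remark} \ref{67}.
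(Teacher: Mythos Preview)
Your proposal is correct and matches the paper's intended approach: the corollary is stated without proof precisely because it is the specialization $Q=\mathcal{I}$ of \emph{Theorem}~\ref{59}, together with the identification of a Parseval biframe with one whose biframe operator is the identity. Your justification of that identification via self-adjointness of $S_{F,G}$ in the complex setting (\emph{Theorem}~\ref{31} and \emph{Remark}~\ref{67}) is exactly the point needed.
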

The end corollary illusterates that corresponding to every b-Riesz basis exists b-Riesz bases for $\mathcal{H}.$
\begin{cor}\label{62}
Let $\{f_{k}\}_{k=1}^{\infty}$ be a b-Riesz basis for $\mathcal{H}$. Then the canonical dual of $\{f_{k}\}_{k=1}^{\infty}$, and also, every biorthogonal frame to $\{f_{k}\}_{k=1}^{\infty}$, are b-Riesz bases for $\mathcal{H}$.
\end{cor}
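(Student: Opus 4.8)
The plan is to reduce everything to the characterization provided by Theorem~\ref{73} and Proposition~\ref{48}, namely that a sequence is a b-Riesz basis precisely when it has the form $\{Ue_k\}_{k=1}^\infty$ for some orthonormal basis $\{e_k\}_{k=1}^\infty$ and some $U\in B^{+}_{b.b.}(\mathcal{H})$. So for each of the two sequences asserted to be b-Riesz bases, it suffices to exhibit an orthonormal basis and a self-adjoint, positive, bounded-below operator sending the basis onto it.

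First I would handle the canonical dual of $\{f_k\}_{k=1}^\infty$. Since $\{f_k\}_{k=1}^\infty$ is a b-Riesz basis, write $f_k=Ue_k$ with $U\in B^{+}_{b.b.}(\mathcal{H})$ and $\{e_k\}_{k=1}^\infty$ an orthonormal basis. A b-Riesz basis is in particular a Riesz basis, hence a frame, so its frame operator $S_F$ is well-defined and in $GL^{+}(\mathcal{H})$; the canonical dual is $\{S_F^{-1}f_k\}_{k=1}^\infty$. As computed in the spirit of Proposition~\ref{49}, for $f\in\mathcal{H}$ one has $S_F f=\sum_k\langle f,Ue_k\rangle Ue_k=U^2 f$, so $S_F=U^2$ and therefore $S_F^{-1}f_k=U^{-2}Ue_k=U^{-1}e_k$. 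Since $U\in B^{+}_{b.b.}(\mathcal{H})$, so is $U^{-1}$ (it is self-adjoint, positive, and bounded below because $U$ is bounded above). Thus $\{S_F^{-1}f_k\}=\{U^{-1}e_k\}$ has exactly the form required by Theorem~\ref{73}(iii), so it is a b-Riesz basis. One could alternatively invoke Corollary~\ref{61}, observing that $(\{f_k\},\{S_F^{-1}f_k\})$ is a Parseval biframe and reading off that $g_k=U^{-1}e_k$.

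Next I would treat an arbitrary biorthogonal frame $\{g_k\}_{k=1}^\infty$ to $\{f_k\}_{k=1}^\infty$, meaning $\langle f_k,g_j\rangle=\delta_{k,j}$. For a Riesz basis the biorthogonal system is unique and in fact coincides with the canonical dual $\{S_F^{-1}f_k\}$; indeed, expanding $g_j=\sum_k\langle g_j,S_F^{-1}f_k\rangle f_k$ — using that $\{f_k\}$ is a Riesz basis with dual $\{S_F^{-1}f_k\}$ — and applying biorthogonality gives $g_j=S_F^{-1}f_j$. Hence every biorthogonal frame to $\{f_k\}$ is the canonical dual, and the first part already shows it is a b-Riesz basis. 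If instead one wants to argue without uniqueness, note that Corollary~\ref{46}(ii) together with the final assertion of Theorem~\ref{29} shows $(\{f_k\},\{g_k\})$ is a Parseval biframe, so Corollary~\ref{61} again yields $g_k=U^{-1}e_k$, a b-Riesz basis.

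The only delicate point is the bookkeeping about which operator class is closed under inversion: one must confirm that $U\in B^{+}_{b.b.}(\mathcal{H})$ forces $U^{-1}\in B^{+}_{b.b.}(\mathcal{H})$, i.e. that $U^{-1}$ is self-adjoint, positive, and bounded below. Self-adjointness and positivity of $U^{-1}$ are standard (the inverse of an invertible positive operator is positive), and $U^{-1}$ is bounded below by $1/\|U\|$ since $\|Uf\|\le\|U\|\,\|f\|$ gives $\|f\|\le\|U\|\,\|U^{-1}f\|$ after substituting $f\mapsto U^{-1}f$; here $U$ is bounded because it belongs to $B(\mathcal{H})$ by hypothesis of $B^{+}_{b.b.}(\mathcal{H})$. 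Everything else is a direct substitution into the identities already established for biframe operators, so this is the main (and essentially only) obstacle, and it is mild.
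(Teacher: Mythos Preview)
Your proposal is correct. The paper's own proof is precisely the alternative route you sketch: it notes that a Riesz basis has a unique dual (the canonical one), invokes Corollary~\ref{46} to see that any dual or biorthogonal frame forms a Parseval biframe with $\{f_k\}$, and then applies Corollary~\ref{61} to conclude these are b-Riesz bases. Your primary argument for the canonical dual---computing $S_F=U^2$ directly so that $S_F^{-1}f_k=U^{-1}e_k$ and checking $U^{-1}\in B^{+}_{b.b.}(\mathcal{H})$---is a slightly more hands-on variant that bypasses Corollary~\ref{46}, but it lands in the same place and uses the same characterization (Theorem~\ref{73}/Corollary~\ref{61}); there is no substantive difference in strategy.
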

\begin{proof}
Since $\{f_{k}\}_{k=1}^{\infty}$ is a Riesz basis, it only has one dual frame, the canonical dual frame. Now, according to the proof of Lemma \ref{46}, every dual of $\{f_{k}\}_{k=1}^{\infty}$, and also every biorthogonal frame to $\{f_{k}\}_{k=1}^{\infty}$, form a Parseval biframe with $\{f_{k}\}_{k=1}^{\infty}$. Hence, by \emph{Corollary} \ref{61}, they are b-Riesz bases for $\mathcal{H}$.
\end{proof}

\newpage2
\begin{center}
\Large{Corrigendum to \\
 ''Biframes and some of their properties''}
\\
	{M. Firouzi Parizi, A. Alijani, M.A. Dehghan} \\ [0.2cm]
	\small {\textit{Department of Mathematics, Faculty of Mathematics, 
	\\
	Vali-e-Asr University of Rafsanjan, Rafsanjan, Iran}}\\
		
 \texttt{{\small E-mails:f75maryam@gmail.com; Alijani@vru.ac.ir; dehghan@vru.ac.ir }}
 \end{center}

Upon further examination, it was found that there is an error in the paper [1]. 
The authors would like to publish a corrigendum separately from the published article. They believe that publishing such a corrigendum would be beneficial for readers of the article.\\

In the continuation of the studies conducted on the sets $\mathcal{E}$ and $\mathcal{R}$ introduced in section 6, it was found that $\mathcal{E}=\mathcal{R}$, and this result affects the results obtained after that. Therefore, we modify these results as follows.\\
1. In Example 6.6  part (i), the presented set 
$$\{e^{a}_k\}_{k=1}^{2}=\Big\{(a,\sqrt{1-a^2}),(-\sqrt{1-a^2},a)\Big\},\ a\in [0,1].$$
is only a part of orthonormal bases for $\mathbb{R}^2$. Because for $a=-\sqrt{\frac{2}{5}}$, the set $\{e^{-\sqrt{\frac{2}{5}}}_k\}_{k=1}^{2}$ is an orhtonormal basis for $\mathbb{R}^2$ and  $\{f_k\}_{k=1}^{2}\in [\{e^{-\sqrt{\frac{2}{5}}}_k\}]$.\\

 2. After this example, it was found that there is not a sequence $\{f_k\}_{k=1}^{\infty}\in \mathcal{R}$ such that $\{f_k\}_{k=1}^{\infty}\notin \mathcal{E}$. Hence, we change proposition 6.8, in the following form.\\
 \emph{Proposition 6.8}.  The sets $\mathcal{E}$ and $\mathcal{E}'$ are equal to the set of all Riesz bases.\\
 We add the proof of $\mathcal{R}\subset\mathcal{E}$ in the continuation of the proof of this proposition.\\
 Let $\{f_k\}_{k=1}^{\infty}\in \mathcal{R}$. So there exist an invertible operator $V\in B(\mathcal{H})$ and an orthonormal basis $\{e_k\}_{k=1}^{\infty}$ for $\mathcal{H}$ such that $f_k=Ve_k$, for every $ k\in\mathbb{N}$. By polar decomposition [2] of $V^*$, we have $V^*=W(VV^*)^\frac{1}{2}$, where $W$ is a unitary operator. Therefore $V=(VV^*)^{\frac{1}{2}}W^*=S^{\frac{1}{2}}W^*,$ where $S$ is the frame operator of $\{f_k\}_{k=1}^{\infty}$.\\
  Now set $\delta_k:=W^* e_k$, then $\{\delta_k\}_{k=1}^{\infty}$ is an orthonormal basis for $\mathcal{H}$ and we have
   $$f_k=S^{\frac{1}{2}}\delta_k,\ \ \forall k\in\mathbb{N}.$$
 This implies that $\{f_k\}_{k=1}^{\infty}\in [\{\delta_k\}]$, and so $\{f_k\}_{k=1}^{\infty}\in\mathcal{E}$.\\
 
 3. On page 20, after the proof of \emph{Proposition 6.8}, the authors have given descriptions about the subsets ${[\{e_k\}]}'$ of $\mathcal{E}'$. These results are changed by the above corrections.\\ 
 By \emph{Proposition 6.8}, $\mathcal{E}=\mathcal{R}=\mathcal{E'}$ and therefore the sets ${[\{e_k\}]}'$ are distinct too, so the collection of them is a partition for $\mathcal{R}$ in a complex Hilbert space. Hence there is no difference between pair frames and biframes in this property. \emph{Example 6.9}, illustrates that the sets ${[\{e_k\}]}'$ are not distinct in a real Hilbert space.
 
 \begin{center}
 {\bf REFRENCES}
 \end{center}
$[1]$ M. Firouzi Parizi, A. Alijani, and M.A. Dehghan, Biframes and some of their properties, Journal of Inequalities and Applications., 1(2022) 1-24.\\
$[2]$ C.S. Kubrusly, The Elements of Operator Theory, Birkh$\ddot{a}$user,  Brazil. 2010.

\end{document}